\setlist{leftmargin=1.6em}
\newtheorem{theorem}{Theorem}[section]
\newtheorem{proposition}{Proposition}[section]
\newtheorem{assumption}{Assumption}[section]
\newtheorem{lemma}{Lemma} [section]
\newtheorem{corollary}{Corollary}[section]
\theoremstyle{definition}
\newtheorem{example}{Example}[section]
\theoremstyle{remark}
\newtheorem{remark}{Remark}[section]
\def\subsubsection{\@startsection{subsubsection}{3}%
  \z@{.5\linespacing\@plus.7\linespacing}{-.5em}%
  {\normalfont\bfseries}}
\newcommand{\calB}[0]{\mathcal{B}}
\newcommand{\calC}[0]{\mathcal{C}}
\newcommand{\calF}[0]{\mathcal{F}}
\newcommand{\calH}[0]{\mathcal{H}}
\newcommand{\calP}[0]{\mathcal{P}}
\newcommand{\calX}[0]{\mathcal{X}}
\newcommand{\calY}[0]{\mathcal{Y}}
\newcommand{\supp}{\mathrm{spt}}
\newcommand{\E}[0]{\mathbb{E}}
\newcommand{\R}[0]{\mathbb{R}}
\newcommand{\Prob}[0]{\mathbb{P}}
\newcommand{\vertiii}[1]{{\left\vert\kern-0.25ex\left\vert\kern-0.25ex\left\vert #1 
    \right\vert\kern-0.25ex\right\vert\kern-0.25ex\right\vert}}
\newcommand{\NN}{\mathbb{N}}
\newcommand{\vasti}{\bBigg@{3.5 }}
\newcommand{\vast}{\bBigg@{4}}
\newcommand{\Vast}{\bBigg@{5}}
\newcommand{\Vastt}{\bBigg@{7}}
\newcommand{\be}{\begin{equation}}
\newcommand{\ee}{\end{equation}}
\newcommand{\ba}{\begin{align}}
\newcommand{\ea}{\end{align}}
\newcommand{\baa}{\begin{align*}}
\newcommand{\eaa}{\end{align*}}
\newif\ifedit
\begin{document}

\title{Large deviations for dynamical Schr\"{o}dinger problems}

\thanks{
K. Kato is partially supported by NSF grants DMS-1952306, DMS-2210368, and DMS-2413405.}

\date{First version: February 6, 2024. This version: \today}
\author[K. Kato]{Kengo Kato}
\address[K. Kato]{
Department of Statistics and Data Science, Cornell University.
}
\email{kk976@cornell.edu}

\begin{abstract}
We establish large deviations for dynamical Schr\"{o}dinger problems driven by perturbed Brownian motions when the noise parameter tends to zero.  Our results show that Schr\"{o}dinger bridges charge exponentially small masses outside the support of the limiting law that agrees with the optimal solution to the dynamical Monge-Kantorovich optimal transport problem. Our proofs build on mixture representations of Schr\"{o}dinger bridges and establishing exponential continuity of Brownian bridges with respect to the initial and terminal points. 
\end{abstract}

\keywords{Dynamical Schr\"{o}dinger problem, entropic optimal transport, large deviation, Schr\"{o}dinger bridge}

\subjclass[2020]{60F10, 60F07, 49N15}

\maketitle

\section{Introduction}

\subsection{Overview}

The dynamical Schr\"{o}dinger problem \cite{follmer1988random,leonard2014survey} seeks to find the entropic projection of a reference path measure  (such as a Wiener measure) onto the space of path measures with given initial and terminal distributions. 
Originally motivated by physics, the problem has received increasing interest from other application domains such as statistics and machine learning; see \cite{bernton2019schr,pavon2021data,de2021diffusion,stromme2023sampling} and references therein. From a purely mathematical point of view, the time marginal flow, called entropic interpolation, provides a powerful technique for deriving functional inequalities and analysis of metric measure spaces \cite{boissard2014some,gentil2020entropic,gigli2021second}, making the dynamical Schr\"{o}dinger problem of intrinsic interest. Additionally, the static version of the Schr\"{o}dinger problem is equivalent to quadratic entropic optimal transport (EOT) \cite{nutz2021introduction}, the analysis of which has seen extensive research activities. This is in particular due to EOT admitting efficient computation via Sinkhorn's algorithm, which lends itself well to large-scale data analysis \cite{cuturi2013lightspeed,peyre2019computational}.

Schr\"{o}dinger problems can be interpreted as noisy counterparts of Monge-Kantorovich optimal transport (OT) problems. In particular,  \cite{mikami2004monge, mikami2008optimal, leonard2012schrodinger} studied 
the rigorous connection between the two problems, establishing convergence of optimal solutions for dynamical Schr\"{o}dinger problems (Schr\"{o}dinger bridges) toward the dynamical OT problem when the noise level tends to zero. In this work, we study local rates of convergence of Schr\"{o}dinger bridges toward the limiting law. Specifically, we establish large deviation principles (LDPs) for Schr\"{o}dinger bridges on a path space and characterize the rate function.

Our baseline setting goes as follows.
Let $\mu_0,\mu_1$ be Borel probability measures on $\R^d$ with finite second moments that will be fixed throughout. Let $E$ be the space of continuous maps $[0,1] \to \R^d$ endowed with the sup norm $\| \omega \|_E = \sup_{t \in [0,1]} |\omega (t)|$ for $\omega = (\omega (t))_{t \in [0,1]} \in E$ (we use $| \cdot |$ to denote the Euclidean norm). 
For a given $\varepsilon > 0$ (noise level), let $R^{\varepsilon}$ be the law, defined on the Borel $\sigma$-field of $E$, of $\xi + \sqrt{\varepsilon}W$, where $\xi \sim \mu_0$ and $W=(W(t))_{t \in [0,1]}$ is a standard Brownian motion starting at $0$ independent of $\xi$. For $s,t \in [0,1]$, we denote the projections at $t$ and $(s,t)$ as $e_t$ and $e_{st}$, respectively, i.e., $e_t(\omega) =\omega (t)$ and $e_{st} (\omega) = (\omega (s),\omega(t))$ for $\omega \in E$.
For a given Borel probability measure $P$ on $E$, denote $P_t = P \circ e_t^{-1}$ and $P_{st} = P \circ e_{st}^{-1}$.
Given two endpoint marginals $\mu_0,\mu_1$ and a reference measure $R^{\varepsilon}$, the dynamical Schr\"{o}dinger problem reads as
\begin{equation}
\min_{P: P_0=\mu_0,P_1=\mu_1} \calH(P | R^\varepsilon), \label{eq: dynamical Schrodinger}
\end{equation}
where $\calH(\cdot \mid \cdot)$ denotes the relative entropy (see Section \ref{sec: notation} for the formal definition). 
Provided $\mu_1$ has finite entropy relative to the Lebesgue measure (cf. Remark \ref{rem: entropy}), the problem (\ref{eq: dynamical Schrodinger}) admits a unique optimal solution $P^{\varepsilon}$, called the \textit{Schr\"{o}dinger bridge}. The solution $P^{\varepsilon}$ is given by a mixture of Brownian bridges against a (unique) optimal solution $\pi_\varepsilon$ to the static Schr\"{o}dinger problem 
\begin{equation}
\min_{\pi \in \Pi(\mu_0,\mu_1)} \calH(\pi | R_{01}^\varepsilon),
\label{eq: static Schrodinger one}
\end{equation}
where $\Pi(\mu_0,\mu_1)$ is the set of couplings with marginals $\mu_0$ and $\mu_1$.  The zero-noise limit ($\varepsilon \downarrow 0$) of (\ref{eq: static Schrodinger one}) corresponds to the OT problem with quadratic cost $c(x,y) = |x-y|^2/2$,
\begin{equation}
\min_{\pi \in \Pi(\mu_0,\mu_1)} \int c \, d\pi,
\label{eq: OT problem}
\end{equation}
which admits a unique optimal solution (OT plan) $\pi_o$ (as $\mu_1$ is assumed to be absolutely continuous; \cite{brenier1991polar}). 

In his influential work \cite{mikami2004monge}, Mikami proved, under an additional assumption that $\mu_0$ is absolutely continuous, that
$P^{\varepsilon}$ converges weakly to the law $P^o$ of the geodesic path connecting two random endpoints following $\pi_o$, $t \mapsto \sigma^{\xi_0,\xi_1}(t)$ for $\sigma^{xy}(t) = (1-t)x+ty$ and $(\xi_0,\xi_1) \sim \pi_o$, i.e., $P^o = \int \delta_{\sigma^{xy}} \, d\pi_o (x,y)$ with $\delta_{\cdot}$ denoting the Dirac delta.\footnote{\cite{mikami2004monge} indeed proved convergence w.r.t. Wasserstein $W_2$ distance.} The limiting law $P^o$ can be characterized as an optimal solution to the dynamical OT problem
\[
\min_{P: P_0=\mu_0, P_1=\mu_1} \int \left ( \frac{1}{2}\int_{0}^1 |\dot{\omega}(t)|^2 \, dt \right ) \, dP(\omega),
\]
where $\dot{\omega}(t)$ denotes the time derivative of $\omega$ and $\int_{0}^1 |\dot{\omega}(t)|^2 \, dt =\infty$ if $\omega$ is not absolutely continuous \cite{leonard2012schrodinger}.
The marginal laws of the limiting process give rise to a constant-speed geodesic (displacement interpolation; \cite{mccann1997convexity}) in the Wasserstein space connecting $\mu_0$ and $\mu_1$. 

Our main large deviation results establish that\footnote{See Sections \ref{sec: notation} and \ref{sec: prelim} for notations and definitions.}, under regularity conditions, for any sequence $\varepsilon_k \downarrow 0$, 
the Schr\"{o}dinger bridges $P^{\varepsilon_k}$ satisfy an LDP with rate function $I(h) = \int_0^1(|\dot{h}(t)|^2/2) \, dt - \psi^c(h(0))  - \psi(h(1))$, where $\psi$ is an OT (or Kantorovich) potential from $\mu_1$ to $\mu_0$ and $\psi^c$ is its $c$-transform (the rate function $I$ is set to $\infty$ if $h(0)$ or $h(1)$ is outside the support of $\mu_0$ or $\mu_1$, respectively).  Very roughly, this means  $P^{\varepsilon_k}(A) \approx e^{-\varepsilon_k^{-1} \inf_{h \in A}I(h)}$ for large $k$. The rate function $I(h)$ vanishes as soon as $h \in \Sigma_{\pi_0} :=  \{ \sigma^{xy} :(x,y) \in \supp(\pi_o) \}$, which agrees with the support of $P^o$, but $I(h)$ is  positive outside $\Sigma_{\pi_0}$ in many cases. Effectively, our result implies that the Schr\"{o}dinger bridges $P^{\varepsilon}$ charge exponentially small masses outside the support of the limiting law $P^o$. Precisely, we establish a weak-type LDP under uniqueness of OT potentials, which allows for marginals with unbounded supports, but induces a full LDP when $\mu_0,\mu_1$ are compactly supported.  

The proof of the main theorem relies on the expression of $P^{\varepsilon}$  as a $\pi_\varepsilon$-mixture of Brownian bridges. The main ingredient of the proof is \textit{exponential continuity}\cite{dinwoodie1992large} of Brownian bridges, i.e., establishing large deviation upper and lower bounds for Brownian bridges when the locations of initial and terminal points vary with the noise level. Note that an LDP for Brownian bridges with \textit{fixed} initial and terminal points was derived in \cite{hsu1990brownian}, but Hsu's proof, which relies on transition density estimates, seems difficult to adapt to establishing the exponential continuity. Instead, we use techniques from abstract Wiener spaces (cf. Chapter 8 in \cite{stroock2010probability}) to establish the said result. Given the exponential continuity, the main theorem follows from combining the large deviation results for $\pi_\varepsilon$ established in \cite{bernton2021entropic}. For the compact support case, we provide a more direct proof of the full LDP using the representation of $P^{\varepsilon}$ as  an integral of a $(\mu_0 \otimes \mu_1)$-mixture of Brownian bridges.  The proof first shows an LDP for the $(\mu_0 \otimes \mu_1)$-mixture of Brownian bridges, and then establishes the full LDP by adapting the (Laplace-)Varadhan lemma (cf. Theorem 4.4.2 in \cite{dembo2009large}) and using convergence of EOT (or Schr\"{o}dinger) potentials. {The alterative proof can be easily adapted to establish an LDP for the dynamical Schr\"{o}dinger problem with Langevin diffusion as a reference measure when two marginals are compactly supported; cf. Remark \ref{rem: langevin} ahead.}

\subsection{Literature review}
The literature related to this paper is broad, so we confine ourselves to the references directly related to our work.
The most closely related are \cite{bernton2021entropic,nutz2021entropic}, which established large deviations for static Schr\"{o}dinger problems in fairly general settings, allowing for marginals on a general Polish space and general continuous costs, and our proofs use several results from their work. \cite{bernton2021entropic} derived a weak LDP for EOT via a novel cyclical invariance characterization of EOT plans, while \cite{nutz2021entropic} built on convergence of EOT potentials. 

The connection between Schr\"{o}dinger and OT problems has been one of the central problems in the OT literature. We focus here on convergence of Schr\"{o}dinger problems. The pioneering works in this direction are \cite{mikami2004monge,mikami2008optimal,leonard2012schrodinger}. Mikami's proof in \cite{mikami2004monge} relies on the fact that the Schr\"{o}dinger bridge $P^{\varepsilon}$ corresponds to a weak solution of a certain stochastic differential equation (SDE) with diffusion component $\sqrt{\varepsilon} \, dW(t)$, the special case of which is often referred to as the \textit{F\"{o}llmer process} \cite{lehec2013representation,mikulincer2021brownian}; see Remark \ref{rem: follmer} below. The drift function of the said SDE being dependent on $\varepsilon$ in a nontrivial way (among others) makes the problem of large deviations for dynamical Schr\"{o}dinger problems fall outside the realm of the Freidlin-Wentzell theory (cf. Chapter 5 in \cite{dembo2009large}). On the other hand, L\'{e}onard's proof in \cite{leonard2012schrodinger} relies on the variational representation of the relative entropy and convex analysis techniques to establish $\Gamma$-convergence of the Sch\"{o}dinger objective functions, which yields convergence of the optimal solutions. Arguably, recent interest in EOT (static Schr\"{o}dinger problem) stems from the fact that EOT provides an efficient computational means for unregularized OT \cite{cuturi2013lightspeed,peyre2019computational}.
From this perspective, extensive research has been done on convergence and speed of convergence of EOT costs, potentials, plans, and maps toward those of unregularized OT \cite{carlier2017convergence,pal2019difference,chizat2020faster,conforti2021formula,nutz2021entropic,pooladian2021entropic,altschuler2022asymptotics,carlier2023convergence}.

To the best of the author's knowledge, this is the first paper to establish large deviations for dynamical Schr\"{o}dinger problems.
As noted in the beginning, the dynamical aspect of the Schr\"{o}dinger bridge has received increasing interest from application domains, which calls for further research on this subject. Our results contribute to the rigorous understanding of the connection between the dynamical Schr\"{o}dinger and OT problems in the small-noise regime. From a technical perspective, our use of mixture representations to explore large deviations on path spaces might be applied to other problems.
Finally, in this work, we focus on the Wiener reference measure that corresponds to the quadratic OT problem. Arguably, this setting would be the most basic. Extending our large deviation results to the dynamical problem in abstract metric spaces \cite{monsaingeon2023dynamical} would be of interest, but beyond the scope of this paper.

\subsection{Organization}
 The rest of the paper is organized as follows. Section \ref{sec: prelim} contains background on EOT, Schr\"{o}dinger, and OT problems, and Section \ref{sec: main} presents the main results. All the proofs  are gathered in Section \ref{sec: proof}.

\subsection{Notations and definitions}
\label{sec: notation}
Let $x \cdot y$ denote the Euclidean inner product for $x,y \in \R^d$.
For $x,y \in \R^d$ and a Borel probability measure $P$ on $E$, let $P^{xy}$ denote the (regular) conditional law of $X$ given $(X(0),X(1)) = (x,y)$ for $X=(X(t))_{t \in [0,1]} \sim P$. For a set $A$, let $\iota_{A}(x) = 0$ if $x\in A$ and $=\infty$ if $x \notin A$.  On a metric space $M$, let $B_M(x,r)$ denote the open ball in $M$ with center $x$ and radius $r$. For a Borel probability measure $\mu$ on a metric space, its support is denoted by $\supp(\mu)$. For probability measures $\alpha,\beta$ on a common measurable space, $\calH(\alpha | \beta)$ is the relative entropy defined as 
\[
\calH(\alpha | \beta) :=  
\begin{cases}
  \int \log \frac{d\alpha}{d\beta} \, d\alpha & \text{if} \ \alpha \ll \beta, \\
  \infty & \text{otherwise}.
\end{cases}
\]

A lower semicontinuous function $I: M \to [0,\infty]$ defined on a metric space $M$ is called a \textit{rate function}. The rate function $I$ is called \textit{good} if all level sets $\{ x : I(x) \le \alpha \}$ for $\alpha \in [0,\infty)$ are compact. 
Given a sequence of positive reals $a_k \to \infty$, a sequence of Borel probability measures $\{ P_k \}_{k \in \NN}$ on $M$ satisfies a weak \textit{large deviation principle} (LDP) with \textit{speed} $a_k$ and rate function $I$,  if 
\begin{enumerate}
    \item[(i)] for every open set $A \subset M$, 
    \[
    \liminf_{k \to \infty}a_k^{-1} \log P_k(A) \ge - \inf_{x \in A} I(x),
    \]
    and
    \item[(ii)] for every compact set $A \subset M$,
\[
\limsup_{k \to \infty} a_k^{-1} \log P_k(A) \le - \inf_{x \in A}I(x). 
\]
\end{enumerate}
If condition (ii) holds for every closed set $A \subset M$, then we say that $\{ P_k \}_{k \in \NN}$ satisfies a (full) LDP. 
We refer the reader to \cite{dembo2009large} as an excellent reference on large deviations.

\section{Preliminaries} 
\label{sec: prelim}

\subsection{From EOT to Schr\"{o}dinger problems}
\label{sec: EOT}
We first review EOT and its connection to the Schr\"{o}dinger problems, which will play a key role in the proofs of the main results. Proofs of the results below can be found in \cite{leonard2014survey} or \cite{nutz2021entropic}. Throughout, we set $\calX = \supp(\mu_0)$ and $\calY = \supp(\mu_1)$. 

Given marginals $\mu_0,\mu_1$, the EOT problem for quadratic cost $c(x,y) = |x-y|^2/2$ reads as
\begin{equation}
\min_{\pi \in \Pi (\mu_0,\mu_1)} \int c \, d\pi + \varepsilon \calH(\pi | \mu_0 \otimes \mu_1) = \min_{\pi \in \Pi (\mu_0,\mu_1)} \varepsilon \left ( \int (c/\varepsilon) \, d\pi + \calH(\pi | \mu_0 \otimes \mu_1) \right). \label{eq: EOT}
\end{equation}
Setting $d\nu_\varepsilon=Z_\varepsilon^{-1} e^{-c/\varepsilon} \, d(\mu_0 \otimes \mu_1) $ with $Z_{\varepsilon}= \int e^{-c/\varepsilon} \, d(\mu_0 \otimes \mu_1)$, we have
\[
\int (c/\varepsilon) \, d\pi + \calH(\pi | \mu_0 \otimes \mu_1) 
=\calH(\pi  | \nu_{\varepsilon}) - \log Z_{\varepsilon},
\]
which implies that \eqref{eq: EOT} is equivalent to the following static Schr\"{o}dinger problem
\begin{equation}
\min_{\pi \in \Pi(\mu_0,\mu_1)} \calH(\pi | \nu_{\varepsilon}).
\label{eq: static Schrodinger}
\end{equation}
Recall that $\Pi (\mu_0,\mu_1)$ is compact for the weak topology. 
Since $\pi \mapsto \calH(\pi | \nu_\varepsilon)$ is lower semicontinuous with respect to (w.r.t.) the weak topology (which follows from the variational representation of the relative entropy) and strictly convex on the set of $\pi$ such  that $\calH(\pi | \nu_\varepsilon)$ is finite (which follows from strict convexity of $x \mapsto x\log x$), the problem \eqref{eq: static Schrodinger} admits a unique optimal solution $\pi_{\varepsilon}$, provided $\calH(\pi | \nu_{\varepsilon}) < \infty$ for some $\pi \in \Pi (\mu_0,\mu_1)$. Since $\mu_0$ and $\mu_1$ have finite second moments, we have $\calH(\mu_0 \otimes \mu_1 | \nu_{\varepsilon}) < \infty$.
We will call $\pi_{\varepsilon}$ the \textit{EOT plan}. 

The EOT plan has a density w.r.t.  $\mu_0 \otimes \mu_1$ given by
\[
d\pi_{\varepsilon} (x,y)= e^{(\varphi_{\varepsilon} (x)+ \psi_{\varepsilon} (y)- c(x,y))/\varepsilon} \, d(\mu_0 \otimes \mu_1)(x,y), 
\]
where $\varphi_{\varepsilon} \in L^1(\mu_0)$ and $\psi_{\varepsilon} \in L^1(\mu_1) $ are \textit{EOT potentials}  satisfying the Schr\"{o}dinger system
\begin{equation}
\begin{cases}
&\int e^{(\varphi_{\varepsilon} (x) +  \psi_{\varepsilon} (y) - c(x,y))/\varepsilon} \, d\mu_1 (y)= 1,  \ \text{$\mu_0$-a.e. $x$}, \\
&\int e^{(\varphi_{\varepsilon}(x) +  \psi_{\varepsilon}(y) - c(x,y))/\varepsilon} \, d\mu_0(x) = 1, \ \text{$\mu_1$-a.e. $y$}. 
\end{cases}
\label{eq: S system}
\end{equation}
EOT potentials are a.s. unique up to additive constants, i.e., if $(\tilde{\varphi}_{\varepsilon},\tilde{\psi}_{\varepsilon})$ is another pair of EOT potentials, then there exists a constant $a \in \R$ such that $\tilde{\varphi}_{\varepsilon} = \varphi_{\varepsilon} + a$ $\mu_0$-a.e. and $\tilde{\psi}_{\varepsilon} = \psi_{\varepsilon} - a$ $\mu_1$-a.e. 
In many cases (e.g. as soon as $\mu_0,\mu_1$ are sub-Gaussian), one can choose versions of (finite) EOT potentials for which the Schr\"{o}dinger system (\ref{eq: S system}) holds for all $x \in \calX$ and $y \in \calY$ (in fact for all $x \in \R^d$ and $y \in \R^d$); see Proposition 6 in \cite{mena2019statistical}. Whenever possible, we always choose such versions of EOT potentials.

To link EOT to the original static Schr\"{o}dinger problem (\ref{eq: static Schrodinger one}), we make the following assumption. 
\begin{assumption}
$\mu_1 \ll dy$ and $\calH(\mu_1 | dy) < \infty$. 
\label{asp: finite entropy}
\end{assumption}

\begin{remark}[On the relative entropy $\calH(\mu_1|dy)$]
\label{rem: entropy}
Here, as in Appendix A in \cite{leonard2014survey}, we define the relative entropy $\calH(\mu_1|dy)$ against the Lebesgue measure $dy$ given by
\[
\calH(\mu_1 | dy) := \int \log (\rho/\mathfrak{g}) \, d\mu_1 + \int (\log \mathfrak{g}) \, d\mu_1 \in (-\infty,\infty]
\]
where $\rho = d\mu_1/dy$ and $\mathfrak{g}$ is the standard Gaussian density on $\R^d$.
\end{remark}

The reference measure $R_{01}^{\varepsilon} = R^{\varepsilon} \circ e_{01}^{-1}$ for (\ref{eq: static Schrodinger one}) has a density w.r.t. $dy d\mu_0(x)$ given by 
$
dR_{01}^\varepsilon(x,y) = (2\pi\varepsilon)^{-d/2} e^{-c(x,y)/\varepsilon} \, dy d\mu_0(x)$, 
so $\nu_{\varepsilon}$ is absolutely continuous w.r.t. $R_{01}^\varepsilon$ with density $d\nu_{\varepsilon}(x,y) 
=(2\pi\varepsilon)^{d/2} Z_{\varepsilon}^{-1} \rho(y) \, dR^{\varepsilon}_{01}(x,y)$.
Hence,
\[
\calH(\pi |\nu_{\varepsilon}) = \calH(\pi | R^{\varepsilon}_{01}) -\frac{d}{2}\log (2\pi\varepsilon) + \log Z_{\varepsilon} -\calH(\mu_1 | dy),
\]
and the unique optimal solution to (\ref{eq: static Schrodinger one}) is given by  $\pi_{\varepsilon}$. 

Going back to the dynamical Schr\"{o}dinger problem \eqref{eq: dynamical Schrodinger}, by the chain rule for the relative entropy, we have
\[
\calH(P | R^{\varepsilon}) = \calH(P_{01} | R_{01}^{\varepsilon}) +  \int \calH(P^{xy} | R^{\varepsilon,xy}) \, dP_{01}(x,y),
\]
which is minimized by taking $P^{xy} = R^{\varepsilon,xy}$ and $P_{01}=\pi_\varepsilon$, i.e.,
\begin{align}
P^{\varepsilon}(\cdot) &= \int R^{\varepsilon,xy}(\cdot) \, d\pi_{\varepsilon}(x,y) \label{eq: mixture} \\
&= \int e^{(\varphi_{\varepsilon}(x) + \psi_{\varepsilon}(y) - c(x,y))/\varepsilon} R^{\varepsilon,xy} (\cdot) \, d (\mu_0 \otimes \mu_1)(x,y). \notag
\end{align}
Alternatively, setting 
$
\bar{R}^{\varepsilon} = \int R^{\varepsilon,xy} \, d(\mu_0 \otimes \mu_1), 
$
which is a $(\mu_0 \otimes \mu_1)$-mixture of Brownian bridges, $P^\varepsilon$ has a density w.r.t. $\bar{R}^\varepsilon$ given by
\begin{equation}
\frac{dP^{\varepsilon}}{d\bar{R}^{\varepsilon}} (\omega) = e^{-\phi_{\varepsilon}(\omega(0),\omega(1))/\varepsilon}, \ \omega = (\omega(t))_{t \in [0,1]} \in E,
\label{eq: S bridge density}
\end{equation}
where $\phi_{\varepsilon}: \calX \times \calY \to \R$ is a function defined by
\[
\phi_{\varepsilon}(x,y) =  c(x,y) - \varphi_{\varepsilon}(x) - \psi_{\varepsilon}(y). 
\]
To see this, for $X = (X(t))_{t \in [0,1]} \sim \bar{R}^{\varepsilon}$ and every Borel set $A \subset E$, 
\[
\begin{split}
\E\left [ \mathbbm{1}_{A} (X) e^{-\phi_{\varepsilon}(X(0),X(1))/\varepsilon} \right ] 
&=\E \left [ \Prob \big( X \in A \mid X(0),X(1)  \big)  e^{-\phi_{\varepsilon}(X(0),X(1))/\varepsilon}\right] \\
&=\E \left [ R^{\varepsilon, (X(0),X(1))}(A) e^{-\phi_{\varepsilon}(X(0),X(1))/\varepsilon} \right ] = P^{\varepsilon}(A), 
\end{split}
\]
where we used $(X(0),X(1)) \sim \mu_0 \otimes \mu_1$. 

\begin{remark}[On Assumption \ref{asp: finite entropy}]
Assumption \ref{asp: finite entropy} is unavoidable to ensure the problem (\ref{eq: static Schrodinger one}) to have a unique optimal solution. On the other hand, the initial distribution $\mu_0$ need not be absolutely continuous, e.g., $\mu_0$ can be discrete. 
\end{remark}

\begin{remark}[Connnection to F\"{o}llmer process]
\label{rem: follmer}
The Schr\"{o}dinder bridge $P^{\varepsilon}$ corresponds to the law of a weak solution to a certain SDE, the special case of which is often referred to as the F\"{o}llmer process. Let $\calB(E)$ be the Borel $\sigma$-field on $E$.  
Equip $(E,\calB(E),R^{\varepsilon})$ with the canonical filtration (augmented, if necessary), and denote by $X=(X(t))_{t \in [0,1]}$ the canonical process, i.e., $X(t,\omega) = \omega(t)$ for $\omega = (\omega(t))_{t \in [0,1]} \in E$. Under $R^\varepsilon$, $W  = \varepsilon^{-1/2} (X-X(0))$ is a standard Brownian motion starting at $0$. Assuming  $\rho=d\mu_1/dy$ is smooth and everywhere positive, we set
\[
\tilde{\psi}_\varepsilon (y)  := \varepsilon \big ( (d/2) \log (2\pi\varepsilon) + \log \rho(y) \big) + \psi_{\varepsilon}(y). 
\]
With this notation, it is seen that
\[
P^\varepsilon (\cdot)= \int e^{(\varphi_{\varepsilon}(x)+\tilde{\psi}_\varepsilon(y))/\varepsilon} R^{\varepsilon,xy}(\cdot) \, dR_{01}^{\varepsilon}(x,y),
\]
which implies (cf. the preceding argument) 
\[
\frac{dP^{\varepsilon}}{dR^{\varepsilon}}  = e^{(\varphi_{\varepsilon}(X(0))+\tilde{\psi}_\varepsilon(X(1)))/\varepsilon}.
\]
Denote 
\[
\mathfrak{h}_\varepsilon (t,y) := 
\begin{cases}
\big(2\pi\varepsilon (1-t)\big)^{-d/2} \int e^{-\frac{1}{\varepsilon} \left ( \frac{c(y,y')}{1-t}-\tilde{\psi}_{\varepsilon}(y') \right)} \, dy' & \text{if} \ t \in [0,1), \\
e^{\tilde{\psi}_{\varepsilon}(y)/\varepsilon} & \text{if} \ t = 1,
\end{cases}
\]
which satisfies $(\partial_t + \varepsilon \Delta_y/2) \mathfrak{h}_\varepsilon  = 0$ under regularity conditions (cf. heat equation). Applying Ito's formula (cf. Theorem 3.3.6 in \cite{karatzas1998}), one has
\[
\begin{split}
\log \mathfrak{h}_{\varepsilon}(1,X(1)) &= \underbrace{\log \mathfrak{h}_{\varepsilon}(0,X(0))}_{=-\varphi_{\varepsilon}(X(0))/\varepsilon} +  \frac{1}{\sqrt{\varepsilon}} \int_0^1 b_\varepsilon (t,X(t)) \cdot \, dW(t) -\frac{1}{2\varepsilon} \int |b_\varepsilon(t,X(t)) |^2 \, dt, 
\end{split}
\]
where we define $b_\varepsilon (t,y) = \varepsilon \nabla_y \log \mathfrak{h}_\varepsilon (t,y)$. 
We conclude that $R^\varepsilon$-a.s., 
\[
\frac{dP^{\varepsilon}}{dR^{\varepsilon}}  = \exp \left \{ \frac{1}{\sqrt{\varepsilon}} \int_0^1  b_\varepsilon (t,X(t)) \cdot \, dW(t) - \frac{1}{2\varepsilon} \int |  b_\varepsilon(t,X(t)) |^2 \, dt \right \}.
\]
By Girsanov's theorem, under $P^{\varepsilon}$, the new process $\tilde{W} = W - \varepsilon^{-1/2} \int_0^\cdot b_\varepsilon (t,X(t)) \, dt$ is a standard Brownian motion, and the process $X$ solves the following SDE
\begin{equation}
dX(t) = b_{\varepsilon}(t,X(t)) \, dt + \sqrt{\varepsilon} \, d\tilde{W}(t), \quad X(0) \sim \mu_0. 
\label{eq: follmer}
\end{equation}
See, e.g., Proposition 5.3.6 in \cite{karatzas1998}; see also \cite{dai1991stochastic}. When $\varepsilon=1$ and $\mu_0 = \delta_0$, one can take $\varphi_{1}(x) = 0$ and $\psi_{1}(y) = |y|^2/2$, so $e^{\tilde{\psi}_{1}}$ is the density of $\mu_1$ w.r.t. the standard Gaussian. Hence, the SDE in (\ref{eq: follmer}) corresponds to the F\"{o}llmer process in \cite{lehec2013representation} (see equation (12)) and \cite{mikulincer2021brownian}.
Abusing terminology, we call $P^{\varepsilon}$ with $\varepsilon > 0$ and $\mu_0 = \delta_0$ a (perturbed) F\"{o}llmer process. 
\end{remark}


\subsection{OT potentials}
\label{sec: Kantorovich}
The rate function for Schr\"{o}dinger bridges involves OT potentials. For duality theory of OT, we refer the reader to \cite{ambrosio2008gradient,villani2009optimal,santambrogio15}. 
The OT problem (\ref{eq: OT problem}) admits a dual problem that reads as
\begin{equation}
\max_{\substack{(\varphi,\psi) \in L^1(\mu_0) \times L^1(\mu_1) \\ \varphi + \psi \le c}} \int \varphi \, d\mu_0 + \int \psi \, d\mu_1. 
\label{eq: dual}
\end{equation}
By restricting to the respective support, it is without loss of generality to assume that $\varphi$ and $\psi$ are functions defined on $\calX$ and $\calY$, respectively.
One of $\varphi$ and $\psi$ can be replaced with the \textit{$c$-transform} of the other. Recall that the $c$-transform of $\psi: \calY \to [-\infty,\infty)$ with $\psi \not \equiv -\infty$ is a function $\psi^c: \calX \to [-\infty,\infty)$ defined by
\[
\psi^c (x) := \inf_{y \in \calY} \big \{ c(x,y) - \psi (y) \big \}, \ x \in \calX. 
\]
The $c$-transform of $\varphi: \calX \to [-\infty,\infty)$ with $\varphi \not \equiv -\infty$ is defined analogously. The dual problem (\ref{eq: dual}) then reduces to 
\begin{equation}
\max_{\psi \in L^1(\mu_1)} \int \psi^c \, d\mu_0 + \int \psi \, d\mu_1,
\label{eq: semidual}
\end{equation}
whose maximum is attained at some $c$-concave function $\psi \in L^1(\mu_1)$ with $\psi^c \in L^1(\mu_0)$ (a function on $\calY$ is called \textit{$c$-concave} if it is the $c$-transform of a function on $\calX$); see, e.g., Theorem 5.9 in \cite{villani2009optimal} or Theorem 6.1.5 in \cite{ambrosio2008gradient}. We call such $\psi$ an \textit{OT potential} from $\mu_1$ to $\mu_0$. An OT potential from $\mu_0$ to $\mu_1$ is defined analogously.

For any OT potential $\psi$ and any OT plan $\pi$, the support of $\pi$ is contained in the \textit{$c$-superdifferential} $\partial^c \psi$ of $\psi$,
\[
\partial^c \psi := \big\{ (x,y) : \psi^c(x) + \psi(y) = c(x,y) \big\}.
\]
Indeed, $\partial^c \psi$ is a closed set (as $c$-concave functions are upper semicontinuous) on which $\pi$ has full measure by duality, so $\supp(\pi)\subset \partial^c \psi$. In particular, for $(x,y) \in \supp(\pi)$, $\psi^c(x)$ and $\psi(y)$ are finite. 

Observe that Assumption \ref{asp: finite entropy} ensures that
\[
\text{the OT problem (\ref{eq: OT problem}) admits a unique OT plan $\pi_o$.}
\]
Let $\calX_o$ and $\calY_o$ denote the projections of $\supp(\pi_o)$ onto $\calX$ and $\calY$, respectively, i.e., 
\[
\calX_o = \{ x : (x,y) \in \supp(\pi_o) \ \text{for some} \ y\},
\]
and $\calY_o$ is defined analogously. As $\pi_o$ is a coupling for $\mu_0$ and $\mu_1$, the sets $\calX_o$ and $\calY_o$ have full $\mu_0$- and $\mu_1$-measure, respectively. As in \cite{bernton2021entropic}, we assume uniqueness of OT potentials (from $\mu_1$ to $\mu_0$) on $\calY_o$ to derive our large deviation results.  

\begin{assumption}
The dual problem (\ref{eq: semidual}) admits a unique OT potential $\psi$ on $\calY_o$, i.e., if $\tilde{\psi}$ is another OT potential, then $\psi - \tilde{\psi}$ is constant on $\calY_o$. 
\label{asp: uniqueness}
\end{assumption}

Appendix B in \cite{bernton2021entropic} and \cite{staudt2022uniqueness} provide various sufficient conditions under which uniqueness of OT potential holds. 
For example, Assumption \ref{asp: uniqueness} holds under each of the following cases. 

\begin{enumerate}
\item[(A)] (\cite{santambrogio15}, Theorem 7.18) \ $\calX$ and $\calY$ are compact and one of them agrees with the closure of a connected open set. 
\item[(B)] (\cite{bernton2021entropic}, Proposition B.2) \ The interior $\mathrm{int}(\calY)$ is connected, $\mu_1$ is absolutely continuous with positive Lebesgue density  on $\mathrm{int}(\calY)$, and $\mu_1(\partial \calY)=0$.
\end{enumerate}

Case (A) does not require $\mu_0$ or $\mu_1$ to have a Lebesgue density (although Assumption \ref{asp: finite entropy} requires $\mu_1 \ll dy$). 
We provide a self-contained proof of Case (A) in Lemma \ref{lem: uniqueness} for completeness. Case (B) imposes no restrictions on $\mu_0$, so it allows $\mu_0$ to be discrete.

\begin{remark}
Often, regularity conditions are imposed on the input measure $\mu_0$ to ensure uniqueness or regularity of OT potentials from $\mu_0$ to $\mu_1$. For the (static) EOT case, the role of $\mu_0$ and $\mu_1$ is symmetric, so it is without loss of generality to focus on the forward ($\mu_0 \to \mu_1$) case. However, in our dynamical setting, the roles of $\mu_0$ and $\mu_1$ are asymmetric because of Assumption \ref{asp: finite entropy}.
Since Assumption \ref{asp: finite entropy} already imposes absolute continuity on $\mu_1$, we treat OT potentials for the backward direction ($\mu_1 \to \mu_0$), contrary to the convention in the literature. 
\end{remark}

\section{Main results}
\label{sec: main}
We first recall  weak convergence  of $P^\varepsilon$ toward $P^o =  \int \delta_{\sigma^{xy}} \, d\pi_o(x,y)$ with $\sigma^{xy}(t) = (1-t)x+ty$. {Recall that the cost function is $c(x,y) = |x-y|^2/2$.}

\begin{proposition}
\label{prop: weak conv}
Under Assumption \ref{asp: finite entropy}, $P^\varepsilon \to P^o$ weakly as $\varepsilon \downarrow 0$. The support of $P^o$ agrees with $\Sigma_{\pi_o} := \{ \sigma^{xy} : (x,y) \in \supp(\pi_o) \}$. 
\end{proposition}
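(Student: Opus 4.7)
The plan is to leverage the mixture representation
\[
P^\varepsilon = \int R^{\varepsilon,xy} \, d\pi_\varepsilon(x,y)
\]
from \eqref{eq: mixture}, together with two classical ingredients: (i) weak convergence of EOT plans $\pi_\varepsilon \to \pi_o$ as $\varepsilon \downarrow 0$, and (ii) the collapse of the Brownian bridge $R^{\varepsilon,xy}$ to $\delta_{\sigma^{xy}}$, with fluctuations bounded uniformly in the endpoints. Ingredient (i) is standard under Assumption \ref{asp: finite entropy}: it follows from a $\Gamma$-convergence argument for \eqref{eq: EOT} toward the OT problem \eqref{eq: OT problem} (see, e.g., \cite{leonard2012schrodinger,mikami2004monge,carlier2017convergence}), combined with uniqueness of $\pi_o$ coming from Brenier's theorem since $\mu_1 \ll dy$. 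Ingredient (ii) is transparent from the canonical construction: if $B^{\mathrm{br}}$ is a standard Brownian bridge on $[0,1]$ pinned at $0$ at both endpoints, then under $R^{\varepsilon,xy}$ the canonical process has the same law as $\sigma^{xy}+\sqrt{\varepsilon}\, B^{\mathrm{br}}$, so $\|\omega-\sigma^{xy}\|_E$ is distributed as $\sqrt{\varepsilon}\,\|B^{\mathrm{br}}\|_E$, with a distribution independent of $(x,y)$.

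To merge (i) and (ii), I would apply the Skorohod representation theorem. Fix an arbitrary sequence $\varepsilon_k\downarrow 0$. Build on a common probability space random vectors $(X_k,Y_k)\sim \pi_{\varepsilon_k}$ and $(X,Y)\sim \pi_o$ with $(X_k,Y_k)\to (X,Y)$ almost surely, together with an independent standard Brownian bridge $B^{\mathrm{br}}$. Setting $Z^k := \sigma^{X_k Y_k}+\sqrt{\varepsilon_k}\,B^{\mathrm{br}}$, the conditional law of $Z^k$ given $(X_k,Y_k)$ is $R^{\varepsilon_k,X_kY_k}$, so $Z^k \sim P^{\varepsilon_k}$. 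Since $(x,y)\mapsto \sigma^{xy}$ is continuous into $(E,\|\cdot\|_E)$ and $\sqrt{\varepsilon_k}\,\|B^{\mathrm{br}}\|_E\to 0$ almost surely, we obtain $Z^k \to \sigma^{XY}$ almost surely in $E$. Hence $P^{\varepsilon_k} \to P^o$ weakly, and since the subsequence was arbitrary, $P^\varepsilon \to P^o$ weakly as $\varepsilon\downarrow 0$.

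For the support statement, consider the continuous map $T:\R^d\times \R^d\to E$ defined by $T(x,y)=\sigma^{xy}$; then $P^o = T_\#\pi_o$, so $\supp(P^o)=\overline{T(\supp(\pi_o))}$. The map $T$ admits a continuous left inverse $\omega \mapsto (\omega(0),\omega(1))$, since $T(x,y)(0)=x$ and $T(x,y)(1)=y$. Consequently, if $T(x_n,y_n)\to \omega$ in $E$ with $(x_n,y_n)\in \supp(\pi_o)$, then $(x_n,y_n)\to (\omega(0),\omega(1))$, which belongs to $\supp(\pi_o)$ by closedness, whence $\omega = T(\omega(0),\omega(1)) \in T(\supp(\pi_o))$. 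This shows that $T(\supp(\pi_o))$ is already closed, giving $\supp(P^o)=\Sigma_{\pi_o}$.

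The only real obstacle is isolating a reference for $\pi_\varepsilon \to \pi_o$ under the mild Assumption \ref{asp: finite entropy} alone (Mikami additionally assumed $\mu_0 \ll dx$, but the weak convergence itself is robust and follows from $\Gamma$-convergence once $\pi_o$ is unique). Given this input, the rest of the proof is a clean Skorohod plus continuous-mapping argument, powered by the sup-norm scaling $\sqrt{\varepsilon}\|B^{\mathrm{br}}\|_E$ of the Brownian bridge fluctuations, which is uniform in the endpoint pair.
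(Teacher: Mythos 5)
Your proof is correct and reaches the same conclusion, but the technical route differs from the paper's at two points. For the weak convergence, the paper works directly with $1$-Lipschitz test functions $f:E\to[-1,1]$: it writes $\int f\,dP^\varepsilon=\int g_\varepsilon\,d\pi_\varepsilon$ with $g_\varepsilon(z)=\EE[f(\sqrt{\varepsilon}\,W^\circ+\sigma^z)]$, checks that $g_\varepsilon$ is bounded, $2$-Lipschitz and converges pointwise to $f\circ\sigma^{(\cdot)}$, then combines the bounded-Lipschitz characterization of weak convergence (so $\eta_\varepsilon:=\sup_g|\int g\,d(\pi_\varepsilon-\pi_o)|\to 0$) with dominated convergence; you instead invoke Skorohod representation for $(X_k,Y_k)\sim\pi_{\varepsilon_k}$, adjoin an independent standard Brownian bridge, and finish via the continuous mapping theorem. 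Both arguments hinge on the same input $\pi_\varepsilon\to\pi_o$ — the paper cites Proposition~3.2 of \cite{bernton2021entropic}, which already delivers exactly this under Assumption~\ref{asp: finite entropy}, so your worry about sourcing is unnecessary. For the support claim, the paper isolates Lemma~\ref{lem: support}, arguing directly from the definition of support ($\Sigma_\gamma$ is closed, preimages of open sets under $(x,y)\mapsto\sigma^{xy}$ are open with positive $\gamma$-mass); you instead note $P^o=T_\#\pi_o$ for $T(x,y)=\sigma^{xy}$, use $\supp(T_\#\pi_o)=\overline{T(\supp\pi_o)}$, and then observe that $T$ has the continuous left inverse $e_{01}$, so $T$ is a homeomorphism onto its image and $T(\supp\pi_o)$ is already closed. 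Your support argument is a touch more structural than the paper's ad hoc lemma, while your Skorohod argument trades the paper's explicit Lipschitz estimates for slightly heavier machinery; both are valid and of comparable length.
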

\begin{remark}[On Proposition \ref{prop: weak conv}]
A version of this proposition was proved by \cite{mikami2004monge} under the extra assumption that $\mu_0$ is absolutely continuous. Theorem 3.7 in \cite{leonard2012schrodinger} implies the proposition but the proof is somewhat involved (as it covers more general settings). We provide a simple proof in Section \ref{sec: proof}.
\end{remark}

We are now in position to state our main results.
Let $H$ denote the space of absolutely continuous maps $h: [0,1] \to \R^d$ with $\int |\dot{h}(t)|^2 dt < \infty$, where $\dot{h}(t) = dh(t)/dt$. We endow $H$ with the (semi-)inner product
\[
( g,h )_H = \int_0^1 \dot{g}(t) \cdot \dot{h}(t) \, dt.
\]
Set $\| \cdot \|_{H} = \sqrt{(\cdot,\cdot )_H}$. Formally, define $\| h \|_{H} = \infty$ for $h \in E \setminus H$. We first state the weak-type LDP for Schr\"{o}dinger bridges, which allows for marginals with unbounded supports. 

\begin{theorem}[Weak-type LDP for Schr\"{o}dinger bridges]
\label{thm: main}
Suppose Assumptions \ref{asp: finite entropy} and \ref{asp: uniqueness} hold. Pick any $\varepsilon_k \downarrow 0$.  Then the following hold. 
\begin{enumerate}
\item[(i)] For every open set $A \subset e_{01}^{-1}(\calX_o \times \calY_o)$ (w.r.t. the relative topology),
\[
\liminf_{k \to \infty} \varepsilon_k \log P^{\varepsilon_k} (A) \ge - \inf_{h \in A} I(h)
\]
for the rate function 
\[
I(h) = 
\frac{\|h\|_H^2}{2} - \psi^c(h(0)) - \psi(h(1)).
\]
\item[(ii)] For every closed set $A \subset E$ of the form $A = e_{01}^{-1}(C)$ for some compact set $C \subset \calX_o \times \calY_o$,
\begin{equation}
\limsup_{k \to \infty} \varepsilon_{k}\log P^{\varepsilon_k} (A) \le -\inf_{h \in A} I(h). 
\label{eq: LDP upper}
\end{equation}
\end{enumerate}
\end{theorem}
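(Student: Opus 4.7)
My plan is to address the two parts of the theorem separately, in each case exploiting the mixture representation \eqref{eq: mixture} of $P^{\varepsilon}$ against the EOT plan $\pi_\varepsilon$.

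\textbf{Upper bound (part (ii)).} Under $R^{\varepsilon,xy}$, the endpoints are $(x,y)$ almost surely, so $R^{\varepsilon,xy}(e_{01}^{-1}(C)) = \ind\{(x,y)\in C\}$ and \eqref{eq: mixture} collapses to $P^{\varepsilon}(A) = \pi_\varepsilon(C)$. Thus \eqref{eq: LDP upper} reduces to the static LDP upper bound for $\pi_\varepsilon$ on compact sets, which under Assumption~\ref{asp: uniqueness} is the content of \cite{bernton2021entropic} with rate function $J(x,y) = c(x,y) - \psi^c(x) - \psi(y)$ on $\calX_o \times \calY_o$. Matching $J$ with $\inf_{h \in e_{01}^{-1}(C)} I(h)$ reduces to the elementary identity $\inf_{h \in H:\, h(0)=x,\, h(1)=y}\tfrac{1}{2}\|h\|_H^2 = \tfrac{1}{2}|y-x|^2 = c(x,y)$, attained at the linear interpolation $\sigma^{xy}$.

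\textbf{Lower bound (part (i)).} Fix $h \in A$ with $I(h) < \infty$, so that $h \in H$ with $x_0 := h(0) \in \calX_o$ and $y_0 := h(1) \in \calY_o$, and pick $r > 0$ with $B_E(h,r) \subset A$. Bounding the integral in \eqref{eq: mixture} below by its value over a small Euclidean ball $B_\delta(x_0,y_0) \subset \R^{2d}$ and pulling out the pointwise infimum of the integrand,
\[
\varepsilon \log P^{\varepsilon}(B_E(h,r)) \geq \varepsilon \log (\mu_0 \otimes \mu_1)(B_\delta(x_0,y_0)) + \inf_{(x,y) \in B_\delta(x_0,y_0)}\left[\varphi_\varepsilon(x) + \psi_\varepsilon(y) - c(x,y) + \varepsilon \log R^{\varepsilon,xy}(B_E(h,r))\right].
\]
Two ingredients then drive the bound. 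First, convergence of EOT potentials on $\calX_o \times \calY_o$, as in \cite{nutz2021entropic, bernton2021entropic}, together with the duality identity $\psi^c(x_0) + \psi(y_0) = c(x_0,y_0)$ at $(x_0,y_0) \in \supp(\pi_o)$, yields $\liminf_{\varepsilon \downarrow 0,\, \delta \downarrow 0}\inf_{B_\delta(x_0,y_0)} [\varphi_\varepsilon + \psi_\varepsilon - c] \ge 0$. Second, an \emph{exponential continuity} lemma for Brownian bridges gives $\liminf_{k \to \infty}\inf_{(x,y) \in B_\delta(x_0,y_0)} \varepsilon_k \log R^{\varepsilon_k, xy}(B_E(h,r)) \ge -\tfrac{1}{2}\|h - \sigma^{x_0 y_0}\|_H^2$ once $\delta$ is small. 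Using $\|h - \sigma^{x_0 y_0}\|_H^2 = \|h\|_H^2 - |y_0 - x_0|^2 = \|h\|_H^2 - 2c(x_0,y_0)$ and $(\mu_0 \otimes \mu_1)(B_\delta(x_0,y_0)) > 0$ since $(x_0,y_0) \in \supp(\mu_0 \otimes \mu_1)$, the three pieces combine to give $\liminf_k \varepsilon_k \log P^{\varepsilon_k}(B_E(h,r)) \ge -\tfrac{1}{2}\|h\|_H^2 + \psi^c(x_0) + \psi(y_0) = -I(h)$.

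\textbf{The main obstacle: exponential continuity.} The crux is the uniform-in-endpoints Schilder-type lower bound just invoked. My plan is to establish it through the translation representation: write $R^{\varepsilon,xy}$ as the law of $\sigma^{xy} + \sqrt{\varepsilon}\,B$, where $B$ is a standard Brownian bridge from $0$ to $0$, so that $R^{\varepsilon,xy}(B_E(h,r)) = \PP(\sqrt{\varepsilon}\,B \in B_E(h - \sigma^{xy}, r))$. Since $\|\sigma^{xy} - \sigma^{x_0 y_0}\|_E \le |x - x_0| + |y - y_0|$, for $(x,y) \in B_\delta(x_0,y_0)$ with $\delta < r/2$ one has $B_E(h - \sigma^{x_0 y_0}, r/2) \subset B_E(h - \sigma^{xy}, r)$, and the fixed target $h - \sigma^{x_0 y_0}$ lies in $H_0 := \{g \in H : g(0) = g(1) = 0\}$. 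Viewing the law of $B$ as a centered Gaussian measure on $E$ whose Cameron-Martin space is $(H_0, \|\cdot\|_H)$ (an abstract Wiener space; cf.~\cite[Ch.~8]{stroock2010probability}), Schilder's theorem for $\sqrt{\varepsilon}\,B$, with good rate $\tfrac{1}{2}\|\cdot\|_H^2 + \iota_{H_0}$, delivers the required bound uniformly in $(x,y)$ over the neighborhood. What makes Hsu's transition-density approach unwieldy is precisely this uniformity in $(x,y)$, which the abstract Wiener space framework handles for free via the translation identity above.
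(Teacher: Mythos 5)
Your high-level architecture matches the paper's: mixture representation \eqref{eq: mixture}, exponential continuity of Brownian bridges via abstract Wiener space/Cameron--Martin translation, and combination with the static large deviation results from \cite{bernton2021entropic}. Your exponential continuity lemma is essentially Proposition~\ref{prop: exponential} of the paper, derived the same way.

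For part (ii), your observation that $R^{\varepsilon,xy}(e_{01}^{-1}(C)) = \ind\{(x,y)\in C\}$ collapses the problem to $P^{\varepsilon}(e_{01}^{-1}(C)) = \pi_{\varepsilon}(C)$ is correct, and it is in fact \emph{simpler} than the paper's proof, which instead runs a covering argument using exponential continuity together with Lemma~\ref{lem: smooth max}. Your reduction to the static weak LDP upper bound for $\pi_{\varepsilon}$ on $C$, with the identity $\inf_{h: (h(0),h(1))=(x,y)}\tfrac12\|h\|_H^2 = c(x,y)$, is a legitimate shortcut.

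For part (i), there is a genuine gap. You fix $h$ with $I(h)<\infty$ so that $(x_0,y_0)\in\calX_o\times\calY_o$, and then invoke the duality identity $\psi^c(x_0)+\psi(y_0)=c(x_0,y_0)$, which holds only for $(x_0,y_0)\in\supp(\pi_o)$. Since $\calX_o\times\calY_o$ is the \emph{product of the projections} of $\supp(\pi_o)$ and is generally a strictly larger set, this does not cover the full class of open sets in the theorem. For $(x_0,y_0)\notin\supp(\pi_o)$ the intermediate claim $\liminf_{\varepsilon,\delta}\inf_{B_\delta}[\varphi_\varepsilon+\psi_\varepsilon-c]\geq 0$ is false; the correct limiting value is $-\phi(x_0,y_0) = \psi^c(x_0)+\psi(y_0)-c(x_0,y_0) \leq 0$. (Your final displayed bound is in fact consistent with this corrected value, but not with the $\geq 0$ claim you state.) A second, subtler issue: your argument produces $-\lim_{\delta\downarrow 0}\sup_{B_\delta}\phi$, and since $\phi$ is only lower semicontinuous, $\sup_{B_\delta}\phi$ need not tend to $\phi(x_0,y_0)$. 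The paper sidesteps both difficulties by replacing the ``potential convergence plus positive-mass'' route with Corollary~4.7 of \cite{bernton2021entropic}, which directly gives $\liminf_k \varepsilon_k\log\pi_{\varepsilon_k}(O_z)\geq -\inf_{O_z}\phi$ --- an \emph{infimum} over the neighborhood, so lower semicontinuity of $\phi$ works in its favor. Your route is essentially the one the paper uses for Corollary~\ref{thm: main2} in the compact-support case (where $\phi$ is continuous and EOT potentials converge uniformly by Lemma~\ref{lem: uniqueness}), and it does not transfer to the general setting of Theorem~\ref{thm: main} without those regularity properties.
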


Theorem \ref{thm: main} is not precisely a weak LDP since (ii) holds for every compact set $C \subset e_{01}^{-1}(\calX_o \times \calY_o)$ but also for some noncompact closed sets. As such, we call Theorem \ref{thm: main} a weak-type LDP. 
If the marginals have compact supports, then a full LPD holds, subject to one technical condition essential to guarantee uniqueness of OT potentials.

\begin{corollary}[Full LDP for Schr\"{o}dinger bridges]
\label{thm: main2}
Suppose Assumption \ref{asp: finite entropy} holds. Pick any $\varepsilon_k \downarrow 0$. If $\calX$ and $\calY$ are compact and one of them agrees with the closure of a connected open set, then the sequence $\{ P^{\varepsilon_k} \}_{k \in \NN}$ satisfies a (full) LDP on $E$ with speed $\varepsilon_k^{-1}$ and good rate function $I$, where $I$ is set to $\infty$ outside $e_{01}^{-1}(\calX \times \calY)$.
\end{corollary}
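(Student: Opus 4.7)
Since $\calX,\calY$ are compact with one of them being the closure of a connected open set, Case (A) following Assumption~\ref{asp: uniqueness} ensures that Assumption~\ref{asp: uniqueness} holds automatically, so there is a unique OT potential $\psi$ from $\mu_1$ to $\mu_0$. The plan is to exploit the factorization \eqref{eq: S bridge density}, $dP^\varepsilon/d\bar{R}^\varepsilon(\omega) = \exp(-\phi_\varepsilon(\omega(0),\omega(1))/\varepsilon)$ with $\phi_\varepsilon = c - \varphi_\varepsilon - \psi_\varepsilon$, and proceed in two steps: (i) establish a full LDP for the $(\mu_0\otimes\mu_1)$-mixture $\bar{R}^\varepsilon$ on $E$ at speed $\varepsilon^{-1}$; and (ii) tilt by $e^{-\phi_\varepsilon/\varepsilon}$ via a Laplace--Varadhan argument that relies on uniform convergence of EOT potentials to OT potentials on the compact supports.

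For step (i), note that under $\bar{R}^\varepsilon$ the canonical process decomposes as $X = \sigma^{\xi_0,\xi_1} + \sqrt{\varepsilon}\,B^\circ$, where $(\xi_0,\xi_1) \sim \mu_0 \otimes \mu_1$ is independent of the standard Brownian bridge $B^\circ$ from $0$ to $0$. The law $\mu_0 \otimes \mu_1$, being independent of $\varepsilon$, satisfies a trivial LDP at any speed with rate $\iota_{\calX \times \calY}$, while Schilder's theorem for the Brownian bridge gives an LDP at speed $\varepsilon^{-1}$ for $\sqrt{\varepsilon}\,B^\circ$ on $E_{00} := \{b \in E : b(0)=b(1)=0\}$ with good rate $\|b\|_H^2/2$. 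Independence combined with the contraction principle applied to the continuous map $(x,y,b) \mapsto \sigma^{xy}+b$ yields a full LDP for $\bar{R}^\varepsilon$ with good rate
\[
J(h) \;=\; \iota_{\calX \times \calY}(h(0),h(1)) \;+\; \tfrac12\bigl(\|h\|_H^2 - |h(1)-h(0)|^2\bigr),
\]
using that a decomposition $h = \sigma^{xy}+b$ with $b \in E_{00}$ forces $(x,y)=(h(0),h(1))$, after which $\|h - \sigma^{h(0)h(1)}\|_H^2 = \|h\|_H^2 - |h(1)-h(0)|^2$.

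For step (ii), standard convergence results for EOT on compact marginals (cf.~\cite{nutz2021entropic}) furnish versions of the EOT potentials with $\varphi_\varepsilon \to \psi^c$ uniformly on $\calX$ and $\psi_\varepsilon \to \psi$ uniformly on $\calY$, so $\phi_\varepsilon \to \phi := c - \psi^c - \psi$ uniformly on $\calX \times \calY$. Because $\bar{R}^\varepsilon$ is supported on paths with endpoints in $\calX \times \calY$, we have the sandwich
\[
e^{-\|\phi_\varepsilon - \phi\|_\infty/\varepsilon}\!\int_A \!e^{-\phi(\omega(0),\omega(1))/\varepsilon}\,d\bar{R}^\varepsilon \;\le\; P^\varepsilon(A) \;\le\; e^{\|\phi_\varepsilon - \phi\|_\infty/\varepsilon}\!\int_A \!e^{-\phi(\omega(0),\omega(1))/\varepsilon}\,d\bar{R}^\varepsilon,
\]
and after multiplying by $\varepsilon_k$ and taking logs the correction $\pm\|\phi_{\varepsilon_k}-\phi\|_\infty$ vanishes. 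Applying the classical Laplace--Varadhan lemma (Theorem 4.4.2 of \cite{dembo2009large}) to the bounded continuous tilt $\omega \mapsto -\phi(\omega(0),\omega(1))$ (extended by any bounded continuous function off $\calX \times \calY$, which does not affect the integral) delivers the full LDP for $P^{\varepsilon_k}$ with rate
\[
I(h) \;=\; J(h) + \phi(h(0),h(1)) \;=\; \tfrac12\|h\|_H^2 - \psi^c(h(0)) - \psi(h(1))
\]
on $e_{01}^{-1}(\calX \times \calY)$ (and $+\infty$ elsewhere), using $c(x,y) - |y-x|^2/2 = 0$. Goodness of $I$ follows because $I(h) \le \alpha$ forces $(h(0),h(1)) \in \calX \times \calY$ and $\|h\|_H \le C(\alpha)$ (since $\psi^c,\psi$ are bounded on the respective compact sets), which gives equicontinuity on $[0,1]$ via Cauchy--Schwarz and hence compact level sets by Arzel\`a--Ascoli.

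The main obstacle is step (ii): the tilt $\phi_\varepsilon$ itself depends on $\varepsilon$, so the classical Varadhan lemma does not apply directly. The crucial observation is that after $\varepsilon_k\log(\cdot)$ the multiplicative correction $e^{\pm\|\phi_{\varepsilon_k}-\phi\|_\infty/\varepsilon_k}$ contributes only $\pm\|\phi_{\varepsilon_k}-\phi\|_\infty \to 0$, so \emph{uniform} (not rate-quantified) convergence of the EOT potentials suffices. This uniform convergence in turn relies on both the compactness of $\calX \times \calY$ and uniqueness of the OT potential guaranteed by the hypothesis.
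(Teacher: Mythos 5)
Your proof is correct and mirrors the overall two-step structure of the paper's direct proof of Corollary~\ref{thm: main2} (LDP for the auxiliary mixture $\bar R^\varepsilon$, then tilt by $e^{-\phi_\varepsilon/\varepsilon}$), but the implementation of both steps is genuinely different, and the comparison is instructive.

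For Step~(i), the paper derives the LDP for $\bar R^\varepsilon$ as a special case of Corollary~\ref{prop: LDP B bridge}, which in turn rests on the exponential continuity of Brownian bridges (Proposition~\ref{prop: exponential}) and Dinwoodie's mixture LDP theorems. You instead exploit the decomposition $X=\sigma^{\xi_0,\xi_1}+\sqrt{\varepsilon}\,B^\circ$ with $(\xi_0,\xi_1)\sim\mu_0\otimes\mu_1$ independent of the bridge, combine the trivial LDP for the ($\varepsilon$-free) mixing measure with the Schilder-type LDP for the bridge, and contract by the continuous map $(z,b)\mapsto\sigma^z+b$. This is more elementary and perfectly sound here, but it relies crucially on the mixing law being fixed; it could not replace the exponential-continuity/Dinwoodie machinery in the proof of the weak-type Theorem~\ref{thm: main}, where the mixing measure is the $\varepsilon$-dependent $\pi_\varepsilon$. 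For Step~(ii), the paper carries out the Laplace--Varadhan tilt by hand (covering level sets of $J$, invoking the smooth-max inequality, etc.), whereas you first sandwich $\phi_{\varepsilon_k}$ between $\phi\pm\|\phi_{\varepsilon_k}-\phi\|_\infty$ using the uniform convergence from Lemma~\ref{lem: uniqueness}, observe that the correction vanishes after multiplying by $\varepsilon_k$, and then apply the classical Varadhan tilted-measure LDP to the \emph{fixed} bounded continuous tilt $-\phi\circ e_{01}$ (Tietze-extended off $\calX\times\calY$). That is cleaner; the only point worth making explicit is that $\inf_h\{J(h)+\phi(h(0),h(1))\}=0$ (equivalently, $\int e^{-\phi_\varepsilon/\varepsilon}\,d\bar R^\varepsilon=1$), so the normalization constant in the tilted LDP is trivial and the rate function is exactly $I=J+\phi\circ e_{01}$ rather than a shifted version. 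Your goodness argument for $I$ (endpoints confined to the compact $\calX\times\calY$, $\|h\|_H$ bounded by $\psi^c,\psi$ being bounded, then Arzel\`a--Ascoli) is valid and essentially what the paper gestures at.
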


We leave several remarks on the preceding results.

\begin{remark}[On Corollary \ref{thm: main2}]
\label{rem: main}
The sets $\calX, \calY$ being compact implies $\calX_o = \calX$ and $\calY_o = \calY$, as the projections from $\calX \times \calY$ onto $\calX$ and $\calY$ are then closed maps. The assumption of Corollary \ref{thm: main2} guarantees uniqueness of OT potentials; see the discussion after Assumption \ref{asp: uniqueness}. Since $P^{\varepsilon}$ charges no mass outside $e_{01}^{-1}(\calX \times \calY)$, the full LDP is indeed deduced from the preceding theorem. 
Connectedness of the support of one of the marginals is essential for uniqueness of OT potentials (see the
discussion before Proposition B.2 in \cite{bernton2021entropic}).
The full LPD (more specifically, establishing exponential tightness) for Schr\"{o}dinger bridges requires both marginals to be compactly supported, since exponential tightness implies the limiting law $P^o$ to be concentrated on a compact set, which fails to hold if one of the marginals has unbounded support. See Remark 4.2 (b) of \cite{nutz2021entropic} for a relevant discussion in the static case.
\end{remark}

\begin{remark}[On the rate function $I(h)$]
Since $\psi^c(x) + \psi(y) \le c(x,y)$ by construction, the rate function $I(h)$ is positive as soon as $h \ne \sigma^{h(0),h(1)}$. Even when $h =\sigma^{h(0),h(1)}$, which entails $\| h \|_H^2/2 = c(h(0),h(1))$, the rate function $I(h) = c(h(0),h(1)) - \psi^c (h(0)) - \psi(h(1))$ can be positive provided $(h(0),h(1)) \notin \supp(\pi_o)$. Section 5 of \cite{bernton2021entropic} provides several conditions under which the rate function for the static case, $\phi (x,y) = c(x,y) - \psi^c(x) - \psi(y)$, is positive outside $\supp(\pi_o)$. Considering the characterization of the support of $P_o$,  our large deviation results essentially imply that the Schr\"{o}dinger bridges $P^{\varepsilon}$ charge exponentially small masses outside $\supp(P_o)$ when $\varepsilon \downarrow 0$. 
\end{remark}

\begin{remark}[Proofs of Theorem \ref{thm: main} and Corollary \ref{thm: main2}]
The proof of Theorem \ref{thm: main} uses the expression $P^{\varepsilon} (A)= \int R^{\varepsilon,xy}(A) \, d\pi_{\varepsilon}(x,y)$ from (\ref{eq: mixture}). The main ingredient is exponential continuity of $\{ R^{\varepsilon_k,xy} \}$, i.e., establishing large deviation upper and lower bounds for $\{ R^{\varepsilon_k,(x_k,y_k)} \}_{k \in \NN}$ when $(x_k,y_k) \to (x,y)$, which will be proved in Proposition \ref{prop: exponential} below. The proof then directly evaluates $P^\varepsilon (A)$ by combining the large deviation results for the static case from \cite{bernton2021entropic}.
As noted in Remark \ref{rem: main}, Corollary \ref{thm: main2} is a special case of Theorem \ref{thm: main}. Nonetheless, we provide a separate, more direct proof for the compact support case. It relies on the expression $P^{\varepsilon}(A) = \int_{A} e^{-\phi_{\varepsilon} \circ e_{01}(\omega)/\varepsilon} \, d\bar{R}^{\varepsilon}(\omega)$ from (\ref{eq: S bridge density}).
Then the proof proceeds 
 as (i) proving an LDP for $\bar{R}^{\varepsilon}$, which follows directly from the exponential continuity \cite{dinwoodie1992large}, and then (ii) adapting the (Laplace-)Varadhan lemma (cf. Theorem 4.4.2 in \cite{dembo2009large}) to evaluate $P^{\varepsilon}(A)$. Step (ii) is relatively simple, because, while the function $\phi_{\varepsilon}$ depends on $\varepsilon$, so the Varadhan lemma is not directly applicable, the assumption of Corollary \ref{thm: main2} ensures uniform convergence of the EOT potentials.  
\end{remark}

{
\begin{remark}[On uniqueness of OT potentials]
Inspection of the proof of Corollary \ref{thm: main2} reveals that, as long as Assumption \ref{asp: finite entropy} holds and $\calX$ and $\calY$ are compact (but without assuming uniqueness of OT potentials), the conclusion of Corollary \ref{thm: main2} continues to hold, provided that 
\begin{equation}
\lim_{k \to \infty} \varphi_{\varepsilon_k} = \bar{\varphi} \quad \text{and} \quad \lim_{k \to \infty} \psi_{\varepsilon_k} = \bar{\psi} \quad \text{uniformly on $\calX$ and $\calY$, respectively},
\label{eq: necessary}
\end{equation}
for some (continuous) functions $\bar{\varphi}$ and $\bar{\psi}$ on $\calX$ and $\calY$, respectively (necessarily, $(\bar{\varphi},\bar{\psi})$ are dual potentials for $(\mu_0,\mu_1)$). The rate function $I$ needs to be modified so that $(\psi^c,\psi)$ are replaced with $(\bar{\varphi},\bar{\psi})$. A similar comment applies to Proposition \ref{prop: langevin} ahead. 
Conversely, the uniform convergence of the EOT potentials in (\ref{eq: necessary}) is necessary for the LDP for the Schr\"{o}dinger bridges $\{ P^{\varepsilon_k} \}_{k \in \NN}$ to hold, by Proposition 4.5 in \cite{nutz2021entropic} in the static case and the contraction principle. 
\end{remark}
}

We shall look at a few special cases.

\begin{example}[F\"{o}llmer process]
When $\mu_0 = \delta_0$, one has $\calY_o = \calY$ and $\psi (y) = |y|^2/2$, so the rate function reduces to $I(h) = \| h \|_H^2/2 - |h(1)|^2/2 + \iota_{\{ 0 \}}(h(0)) + \iota_{\calY}(h(1))$, which vanishes if and only if $h(t)  = ty$ for $t \in [0,1]$ for some $y \in \calY$, i.e., if and only if $h \in \supp(P^o)$.  
\end{example}

\begin{example}[Two-point marginal]
The LDP in Corollary \ref{thm: main2} directly yields an LDP for $P^{\varepsilon_k} \circ f^{-1}$ for any continuous function $f$ from $E$ into another metric space by the contraction principle (cf. Theorem 4.2.1 in \cite{dembo2009large}). We consider the case where $f=e_{st}$ for $0 \le s < t \le 1$. 
Note that $P_{st}^{\varepsilon_k}$ is a coupling for $P_{s}^{\varepsilon_k}$ and $P_{t}^{\varepsilon_k}$. Recall that the marginal flow $(P_t^{\varepsilon})_{t \in [0,1]}$ is called an entropic interpolation, and its limiting analog $(P_t^o)_{t \in [0,1]}$ is a displacement interpolation connecting $\mu_0$ and $\mu_1$. To characterize the rate function for $P_{st}^{\varepsilon_k}$, we need additional notation. 

For a function $f: \R^d \to (-\infty,\infty]$ and $t \ge 0$, define 
\[
\mathcal{Q}_{t}(f)(y) = \inf_{x \in \R^d} \left \{ \frac{c(x,y)}{t} + f(x) \right \}, \ t > 0, \quad \mathcal{Q}_{0}(f)=f.
\] 
The family of operators $\{ \mathcal{Q}_t \}_{t \ge 0}$ is called the \textit{Hopf-Lax semigroup}; cf. Chapter 7 in \cite{villani2009optimal}. Assuming Case (A) after Assumption \ref{asp: uniqueness}, we set $\varphi = \psi^c$ and extend $\varphi$ and $\psi$ to the whole $\R^d$ by setting $\varphi=-\infty$ and $\psi=-\infty$ outside $\calX$ and $\calY$, respectively.  For $0 \le s < t \le 1$, consider the rescaled cost $c^{s,t}(x,t) = c(x,y)/(t-s)$. 

\begin{proposition}[LDP for two-point marginal]
\label{prop: two point}
Suppose Assumption \ref{asp: finite entropy} holds. Pick any $0 \le s < t \le 1$ and $\varepsilon_k \downarrow 0$. If $\calX$ and $\calY$ are compact and one of them agrees with the closure of a connected open set, then the sequence $\{ P^{\varepsilon_k}_{st} \}_{k \in \NN}$ satisfies an LDP on $\R^{2d}$ with speed $\varepsilon_k^{-1}$ and good rate function 
$
I_{st}(x,y) = c^{s,t}(x,y) - \varphi_s(x) - \psi_t(y),
$
where $(\varphi_s,\psi_t):=(-\mathcal{Q}_s (-\varphi),-\mathcal{Q}_{1-t}(-\psi))$ are dual potentials for $(P_s^o,P_t^o)$ w.r.t. $c^{s,t}$, i.e., optimal solutions to (\ref{eq: dual}) with $(\mu_0,\mu_1,c)$ replaced by $(P_s^o,P_t^o,c^{s,t})$.
\end{proposition}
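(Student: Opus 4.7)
The strategy is to deduce the LDP from the full LDP for $\{ P^{\varepsilon_k} \}_{k \in \NN}$ in Corollary~\ref{thm: main2} via the contraction principle, and then to compute the resulting infimum explicitly using the Hopf--Lax semigroup, finally verifying the duality interpretation. Since $e_{st}: E \to \R^{2d}$ is continuous and $I$ is a good rate function, the contraction principle (Theorem 4.2.1 in \cite{dembo2009large}) yields an LDP for $\{ P^{\varepsilon_k}_{st} \}$ on $\R^{2d}$ with speed $\varepsilon_k^{-1}$ and good rate function
\[
\tilde{I}_{st}(x,y) = \inf\{ I(h) : h \in E, \ h(s) = x, \ h(t) = y \}.
\]
It then suffices to show $\tilde{I}_{st} = I_{st}$ and to justify the dual-potential interpretation.

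For the identification $\tilde{I}_{st} = I_{st}$, I would reduce the inner optimization to piecewise-linear paths. Writing $I(h) = \tfrac12 \int_0^1|\dot h(u)|^2\,du - \varphi(h(0)) - \psi(h(1))$ with $\varphi = \psi^c$ (extended by $-\infty$ outside $\calX$ and $\calY$, which is equivalent to adding $\iota_{\calX}(h(0)) + \iota_{\calY}(h(1))$ to a finite-valued $I$), fix $x_0 = h(0),\, y_1 = h(1)$ and minimize the Dirichlet energy on each of $[0,s],\,[s,t],\,[t,1]$ under the constraints $h(0)=x_0,\,h(s)=x,\,h(t)=y,\,h(1)=y_1$. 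Each piece is optimized by an affine interpolation, giving
\[
\min \tfrac12 \textstyle\int_0^1 |\dot h|^2\,du \;=\; \frac{c(x_0,x)}{s} + c^{s,t}(x,y) + \frac{c(y,y_1)}{1-t}.
\]
Substituting and separating variables,
\[
\tilde{I}_{st}(x,y) \;=\; c^{s,t}(x,y) + \inf_{x_0 \in \R^d}\!\Big[\tfrac{c(x_0,x)}{s} - \varphi(x_0)\Big] + \inf_{y_1 \in \R^d}\!\Big[\tfrac{c(y,y_1)}{1-t} - \psi(y_1)\Big],
\]
and the two infima are $\mathcal{Q}_s(-\varphi)(x) = -\varphi_s(x)$ and $\mathcal{Q}_{1-t}(-\psi)(y) = -\psi_t(y)$ by the definitions introduced before the statement. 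This gives exactly $\tilde{I}_{st}(x,y) = c^{s,t}(x,y) - \varphi_s(x) - \psi_t(y) = I_{st}(x,y)$.

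For the duality interpretation, the inequality $\varphi_s(x) + \psi_t(y) \le c^{s,t}(x,y)$ for all $(x,y) \in \R^{2d}$ is immediate from $\tilde I_{st} \ge 0$. For the tightness on the support: by Proposition~\ref{prop: weak conv}, $P^o = \int \delta_{\sigma^{x_0 y_1}}\,d\pi_o(x_0,y_1)$, so $P^o_{st}$ is the pushforward of $\pi_o$ under $(x_0,y_1) \mapsto (\sigma^{x_0y_1}(s), \sigma^{x_0y_1}(t))$, and this map is in fact an optimal coupling of $(P_s^o,P_t^o)$ for the cost $c^{s,t}$ (since $c^{s,t}(\sigma(s),\sigma(t)) = (t-s)c(x_0,y_1)$ and cyclical monotonicity is inherited from $\pi_o$). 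On $\supp(\pi_o)$ the identities $\varphi(x_0)+\psi(y_1)=c(x_0,y_1)$ and $\tfrac{c(x_0,x)}{s}+c^{s,t}(x,y)+\tfrac{c(y,y_1)}{1-t}=c(x_0,y_1)$ (with $x=\sigma(s),\,y=\sigma(t)$) force the two Hopf--Lax infima to be attained at $(x_0,y_1)$, whence $\varphi_s(x)+\psi_t(y)=c^{s,t}(x,y)$ on $\supp(P_{st}^o)$. Integrating against this optimal coupling delivers the dual equality, so $(\varphi_s,\psi_t)$ is a pair of OT potentials for $(P_s^o,P_t^o)$ with respect to $c^{s,t}$.

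The main obstacle I anticipate is bookkeeping with the $\pm\infty$ extensions: one must verify that extending $\varphi,\psi$ by $-\infty$ is compatible with the Hopf--Lax formulas so that the unconstrained infima coincide with the constrained ones and are attained inside $\calX,\calY$. Compactness of the supports under the hypothesis of Corollary~\ref{thm: main2}, together with upper semicontinuity of the $c$-concave potentials, makes this routine; the Hopf--Lax identification and the piecewise-linear reduction are essentially one-line calculations once the framework is set up.
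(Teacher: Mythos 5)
Your argument is essentially the same as the paper's: contraction principle applied to Corollary~\ref{thm: main2}, reduction of the inner infimum to piecewise-linear paths giving the additive decomposition $c^{0,s}(x_0,x)+c^{s,t}(x,y)+c^{t,1}(y,y_1)$, and identification of the two outer infima as Hopf--Lax transforms $\mathcal{Q}_s(-\varphi)$, $\mathcal{Q}_{1-t}(-\psi)$. The one place you diverge is the duality interpretation: the paper simply invokes Theorem~7.35 in \cite{villani2009optimal}, whereas you give a short self-contained argument (nonnegativity of $\tilde I_{st}$ gives $\varphi_s\oplus\psi_t\le c^{s,t}$, equality on $\supp(P^o_{st})$ follows from the displacement identity plus $\varphi\oplus\psi=c$ on $\supp(\pi_o)$, and integrating closes the duality gap). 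That finish is correct and arguably cleaner; note only that your parenthetical appeal to ``cyclical monotonicity inherited from $\pi_o$'' is both unneeded and not quite right as stated, since $c^{s,t}(\sigma^{x_0y_1}(s),\sigma^{x_0'y_1'}(t))$ is not simply $(t-s)\,c(x_0,y_1')$ when $(x_0,y_1)\neq(x_0',y_1')$; the optimality of the pushforward coupling is already a consequence of the integration step, so you can drop that aside.
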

\end{example}

{
Finally, we shall point out that the direct proof for Corollary \ref{thm: main2} can be easily adapted to cover the dynamical Schr\"{o}dinger problem with Langevin diffusion as a reference measure. 
}

\begin{remark}[Langevin diffusion as reference measure]
\label{rem: langevin}
{
For a bounded smooth potential $V: \R^d \to \R$ with bounded derivatives, consider the Langevin diffusion $X=(X(t))_{t \ge 0}$ defined by the unique (strong) solution to the following SDE:
\[
dX(t) = -\nabla V(X(t)) \, dt + \, dW(t), \ X(0) \sim \mu_0,
\]
where $(W(t))_{t \ge 0}$ is a standard Brownian motion starting at $0$ independent of $X(0)$. Let $p_{t}(x,y)$ denote the transition density of the Langevin diffusion $X$ and $\check{R}^\varepsilon$ be the law of $X^{\varepsilon} := (X(\varepsilon t))_{t \in [0,1]}$ defined on $\calB(E)$. Instead of the Wiener reference measure as in (\ref{eq: dynamical Schrodinger}), we consider the dynamical Schr\"{o}dinger problem with reference measure $\check{R}^{\varepsilon}$:
\begin{equation}
\min_{P: P_0 = \mu_0,P_1=\mu_1} \calH(P|\check{R}^{\varepsilon}). 
\label{eq: langevin}
\end{equation}
Under Assumption \ref{asp: finite entropy}, arguing as in Section \ref{sec: prelim}, one can see that the unique optimal solution to (\ref{eq: langevin}) is given by
\[
\check{P}^{\varepsilon} (\cdot) = \int \check{R}^{\varepsilon,xy}(\cdot) \, d\check{\pi}^{\varepsilon}(x,y),
\]
where $\check{R}^{\varepsilon,xy}$ is the conditional law of $X^\varepsilon$ given $(X^\varepsilon (0),X^{\varepsilon}(1))=(x,y)$ and $\check{\pi}^\varepsilon$ is the unique optimal solution to the static EOT problem
\[
\min_{\pi \in \Pi(\mu_0,\mu_1)}\int c_{\varepsilon} \, d\pi + \varepsilon \calH(\pi | \mu_0\otimes \mu_1)
\]
with $c_{\varepsilon}(x,y):=-\varepsilon \log p_\varepsilon (x,y)$. Recall that the transition density $p_t (x,y)$ is everywhere positive (cf. Chapter 3 in \cite{stroock2008partial}) and the conditional laws $\check{R}^{\varepsilon,xy}$ are defined for all $(x,y) \in \R^{2d}$ \cite{bravo2011markovian}. The classical Varadhan asymptotics implies that $\lim_{\varepsilon \downarrow 0} c_\varepsilon (x,y) = |x-y|^2/2=c(x,y)$ (cf. Chapter 4 in \cite{stroock2008partial}), so one can expect that the Schr\"{o}dinger bridges $\{ \check{P}^\varepsilon \}_{\varepsilon > 0}$ satisfy the LDP with the same rate function $I$ as in the Brownian case.  The next proposition confirms this under a similar setting to Corollary \ref{thm: main2}.
}
{
\begin{proposition}[Full LDP for Schr\"{o}dinger bridges: Langevin case]
\label{prop: langevin}
Suppose Assumption \ref{asp: finite entropy} holds. Pick any $\varepsilon_k \downarrow 0$. If $\calX$ and $\calY$ are compact and one of them agrees with the closure of a connected open set, then the sequence $\{ \check{P}^{\varepsilon_k} \}_{k \in \NN}$ satisfies a (full) LDP on $E$ with speed $\varepsilon_k^{-1}$ and good rate function $I$, where $I$ is given in Corollary \ref{thm: main2}.
\end{proposition}
}
{
The condition on the potential $V$ appears to be stronger than needed, but is imposed for the sake of simplicity. 
As announced, the proof follows similar arguments to the direct proof for Corollary \ref{thm: main2}. To establish exponential continuity for the Langevin bridge $\check{R}^{\varepsilon,xy}$, we use the explicit expression of the Radon-Nikodym density of the Langevin bridge against the Brownian bridge; cf. \cite{levy1993dynamics}.}
\end{remark}

\section{Proofs for Section \ref{sec: main}}
\label{sec: proof}
Recall that $R^{\varepsilon,xy}$ is the (regular) conditional law of $x+\sqrt{\varepsilon}W$ given $x+\sqrt{\varepsilon}W(1) = y$ for a standard Brownian motion $W=(W(t))_{t \in [0,1]}$ starting at $0$.  Alternatively, $R^{\varepsilon,xy}$ can be characterized as the law of $\sqrt{\varepsilon} W^\circ + \sigma^{xy}$ with $W^\circ = (W(t)-tW(1))_{t \in [0,1]}$, a standard Brownian bridge. 
For simplicity of notation, let $z = (x,y) \in \R^{2d}$ and denote $R^{\varepsilon,z} = R^{\varepsilon,xy}$. 

\subsubsection{Proof of Proposition \ref{prop: weak conv}}

By uniqueness of the OT plan, we have $\pi_{\varepsilon} \to \pi_o$ weakly by Proposition 3.2 in \cite{bernton2021entropic}, which implies that
\[
\eta_{\varepsilon} := \sup_{\substack{g: \R^{2d} \to [-1,1] \\ g \ \text{$1$-Lipschitz}} } \left | \int g \, d(\pi_{\varepsilon} - \pi_o) \right | \to 0.
\]
See Chapter 1.12 in \cite{vandervaart1996weak}.
Pick any $1$-Lipschitz function $f: E \to [-1,1]$. 
One has
\[
\int f \, dP^{\varepsilon} = \int \left(\int f \, dR^{\varepsilon,z}\right) \, d\pi_{\varepsilon}(z) =\int \underbrace{\E \left [ f( \sqrt{\varepsilon} W^{\circ} + \sigma^z) \right]}_{=:g_{\varepsilon}(z)} \, d\pi_{\varepsilon}(z).
\]
By construction, $g_\varepsilon$ is bounded by $1$,  $|g_\varepsilon (z) - g_\varepsilon (z')| \le \| \sigma^z - \sigma^{z'} \|_{E} \le 2| z-z'|$, and $\lim_{\varepsilon \downarrow 0} g_{\varepsilon}(z)= f(\sigma^z) = \int f \, d\delta_{\sigma^z}$. Hence,
\[
\int g_{\varepsilon} \, d\pi_{\varepsilon} \le \int g_{\varepsilon} \, d\pi_o + 2\eta_\varepsilon = \underbrace{\int \left (\int f \, d\delta_{\sigma^z} \right ) \, d\pi_o}_{=\int f \, dP^o} + o(1)
\]
where we used the dominated convergence theorem. The reverse inequality follows similarly, and we conclude that 
\[
\lim_{\varepsilon \downarrow 0} \int f \, dP^{\varepsilon} = \int f \, dP^o,
\]
which yields $P^\varepsilon \to P^o$ weakly. The second claim follows from Lemma \ref{lem: support} below. \qed

\begin{lemma}
\label{lem: support}
For any Borel probability measure $\gamma$ on $\R^{2d}$, the mixture $P = \int \delta_{\sigma^{xy}} \, d\gamma(x,y)$ has support $\Sigma_{\gamma} := \{ \sigma^{xy} : (x,y) \in \supp (\gamma) \}$. 
\end{lemma}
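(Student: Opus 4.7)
The plan is to realize $P$ as the pushforward $\Phi_{\#}\gamma$ under the continuous map $\Phi:\R^{2d}\to E$ defined by $\Phi(x,y)=\sigma^{xy}$, and then to work directly with $\Phi$ and the definition of support. A one-line calculation shows
\[
\|\sigma^{xy}-\sigma^{x'y'}\|_E \;=\; \sup_{t\in[0,1]}\big|(1-t)(x-x')+t(y-y')\big| \;\le\; \max(|x-x'|,|y-y'|),
\]
while evaluating at $t=0$ and $t=1$ yields the matching lower bound, so that $\Phi$ is a topological embedding.

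For the inclusion $\supp(P)\subseteq\Sigma_\gamma$, I would first check that $\Sigma_\gamma$ is closed: if $\sigma^{x_ny_n}\to\omega$ in $E$ with $(x_n,y_n)\in\supp(\gamma)$, evaluating the limit at $t=0$ and $t=1$ forces $x_n\to\omega(0)$ and $y_n\to\omega(1)$, hence $(\omega(0),\omega(1))\in\supp(\gamma)$ (which is closed) and $\omega=\sigma^{\omega(0)\omega(1)}\in\Sigma_\gamma$. Since $P=\Phi_{\#}\gamma$,
\[
P(\Sigma_\gamma)=\gamma\big(\Phi^{-1}(\Sigma_\gamma)\big)\ge\gamma(\supp(\gamma))=1,
\]
so $\Sigma_\gamma$ is a closed set of full $P$-measure and therefore contains $\supp(P)$.

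For the reverse inclusion $\Sigma_\gamma\subseteq\supp(P)$, pick $\sigma^{xy}\in\Sigma_\gamma$ with $(x,y)\in\supp(\gamma)$ and any $r>0$. Continuity of $\Phi$ makes $\Phi^{-1}(B_E(\sigma^{xy},r))$ an open neighborhood of $(x,y)$ in $\R^{2d}$, so
\[
P(B_E(\sigma^{xy},r))=\gamma\big(\Phi^{-1}(B_E(\sigma^{xy},r))\big)>0
\]
by definition of $\supp(\gamma)$; hence $\sigma^{xy}\in\supp(P)$. The only point where one might hesitate is the closedness of $\Sigma_\gamma$, since in the infinite-dimensional space $E$ continuous images of closed sets need not be closed in general, but the endpoint-evaluation argument bypasses this issue cleanly and avoids having to invoke properness of $\Phi$.
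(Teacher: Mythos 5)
Your proof is correct and follows essentially the same route as the paper: establish that $\Sigma_\gamma$ is closed, note it has full $P$-measure, and then show that each $\sigma^{xy}$ with $(x,y)\in\supp(\gamma)$ has every open neighborhood of positive $P$-mass. You simply spell out the closedness of $\Sigma_\gamma$ (via endpoint evaluation) and the pushforward identity $P=\Phi_{\#}\gamma$, both of which the paper leaves implicit.
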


\begin{proof}
The set $\Sigma_{\gamma}$ is closed in $E$. Pick any $(x,y) \in \supp(\gamma)$ and any open set $U$ containing $\sigma^{xy}$. Since $O = \{ (x',y') : \sigma^{x'y'} \in U \}$ is open in $\R^{2d}$ (as $(x',y') \mapsto \sigma^{x'y'}$ is continuous), we have, for $(\xi_0,\xi_1) \sim \gamma$, $P(U) = \Prob(\sigma^{\xi_0,\xi_1} \in U) = \Prob((\xi_0,\xi_1) \in O) = \gamma(O) > 0$, which yields $\supp(P) = \Sigma_{\gamma}$. 
\end{proof}

\subsection{Exponential continuity of Brownian bridges}

For given $x,y \in \R^d$, \cite{hsu1990brownian} showed that the sequence $\{ R^{\varepsilon,xy} \}_{\varepsilon > 0}$ satisfies an LDP with rate function
\[
J_{xy}(h) =
\frac{\| h \|_{H}^2}{2} - c(x,y) + \iota_{\{(x,y)\}}(h(0),h(1)).
\]
Denote $J_z(h) = J_{xy}(h)$ for $z= (x,y)$. Additionally, set $H_z := \{ h \in H : (h(0),h(1)) = z \}$. Pick any $\varepsilon_k \downarrow 0$. 

\begin{proposition}[Exponential continuity of Brownian bridges]
\label{prop: exponential}
The following hold.
\begin{enumerate}
    \item[(i)] For every open set $A \subset E$, 
\[
\liminf_{k \to \infty} \varepsilon_k \log R^{\varepsilon_k,z_k} (A) \ge -\inf_{h \in A} J_{z}(h)
\]
whenever $z_k \to z$ in $\R^{2d}$.
\item[(ii)] For every closed set $A \subset E$, 
\begin{equation}
\limsup_{k \to \infty} \varepsilon_k \log R^{\varepsilon_k,z_k} (A) \le -\inf_{h \in A} J_{z}(h)
\label{eq: upper bound zero}
\end{equation}
whenever $z_k \to z$ in $\R^{2d}$.
\end{enumerate}
\end{proposition}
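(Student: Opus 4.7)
The plan is to reduce the claim to Schilder's theorem for the Brownian bridge combined with a translation argument. Let $Q^\varepsilon$ denote the law on $E$ of $\sqrt{\varepsilon}\, W^\circ$. Viewing $E_0 = \{g \in E : g(0) = g(1) = 0\}$ as an abstract Wiener space with Cameron-Martin space $H_0 = \{h \in H : h(0) = h(1) = 0\}$ (cf.\ Chapter 8 of \cite{stroock2010probability}), the main input will be a full LDP for $\{Q^\varepsilon\}$ on $E$ with good rate function $J^\circ(g) := \|g\|_H^2/2$ when $g \in H_0$ and $+\infty$ otherwise. Since $R^{\varepsilon, z}$ is the law of $\sqrt{\varepsilon}\, W^\circ + \sigma^z$, the identity $R^{\varepsilon_k, z_k}(A) = Q^{\varepsilon_k}(A - \sigma^{z_k})$ reduces the problem to controlling $Q^{\varepsilon_k}$ on translates of a fixed set, after recording the rate-function identity $J^\circ(h - \sigma^z) = J_z(h)$ whenever $(h(0), h(1)) = z$, which follows from $\|h - \sigma^z\|_H^2 = \|h\|_H^2 - |y-x|^2 = \|h\|_H^2 - 2c(x,y)$.

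For the lower bound (i), I would fix an open $A \subset E$ with $\inf_A J_z < \infty$ (else the inequality is trivial), take $h^* \in A$ with $J_z(h^*) \le \inf_A J_z + \delta$, and choose $r > 0$ with $B_E(h^*, r) \subset A$. Since $\|\sigma^{z_k} - \sigma^z\|_E \le |x_k - x| + |y_k - y| \to 0$, for all sufficiently large $k$ the ball $B_E(g^*, r/2)$ with $g^* := h^* - \sigma^z$ is contained in $A - \sigma^{z_k}$: indeed, $\|g - g^*\|_E < r/2$ together with $\|\sigma^{z_k} - \sigma^z\|_E < r/2$ implies $\|g + \sigma^{z_k} - h^*\|_E < r$, hence $g + \sigma^{z_k} \in A$. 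Therefore $R^{\varepsilon_k, z_k}(A) \ge Q^{\varepsilon_k}(B_E(g^*, r/2))$, and Schilder's LDP applied to this fixed open ball gives $\liminf_k \varepsilon_k \log R^{\varepsilon_k, z_k}(A) \ge -J^\circ(g^*) = -J_z(h^*)$; letting $\delta \downarrow 0$ closes this case.

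For the upper bound (ii), I would fix a closed $A \subset E$ and $\delta > 0$, and observe by an analogous translation estimate that $A - \sigma^{z_k} \subset (A - \sigma^z)^\delta$ for all large $k$, where $(\cdot)^\delta$ denotes the open $\delta$-neighborhood in $E$. Passing to closures and applying the LDP upper bound for $Q^{\varepsilon_k}$ yields $\limsup_k \varepsilon_k \log R^{\varepsilon_k, z_k}(A) \le -\inf_{\overline{(A - \sigma^z)^\delta}} J^\circ$. It then remains to let $\delta \downarrow 0$ and show $\inf_{\overline{(A - \sigma^z)^\delta}} J^\circ \uparrow \inf_{A - \sigma^z} J^\circ = \inf_A J_z$, which is the standard upgrade from a $\delta$-fattening to the original closed set using goodness of $J^\circ$.

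The hard part I expect is exactly this last $\delta \downarrow 0$ passage. If the convergence failed, one could extract near-minimizers $g_\delta$ with $d(g_\delta, A - \sigma^z) \le \delta$ and $J^\circ(g_\delta)$ uniformly strictly below $\inf_{A - \sigma^z} J^\circ$; goodness of $J^\circ$ (compactness of sublevel sets of $\|\cdot\|_H$ in $E$, which is precisely the abstract Wiener space input) would then yield a subsequential limit $g^* \in A - \sigma^z$ by closedness, and lower semicontinuity of $J^\circ$ would force $J^\circ(g^*) < \inf_{A - \sigma^z} J^\circ$, a contradiction. Everything else amounts to transferring Schilder's LDP for the Brownian bridge across uniformly small, explicitly controlled shifts in $E$.
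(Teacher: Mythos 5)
Your proposal is correct, and it takes a genuinely different route from the paper. The paper proves the exponential continuity \emph{from scratch}, in the sense that it goes back to the two key building blocks of Schilder's theorem in an abstract Wiener space: the Cameron--Martin formula (for the lower bound over shrinking balls around $h_k = \bar h + \sigma^{z_k}$) and Stroock's ball-probability estimate on p.~339 (for the matching upper bound), and then the standard local-to-global argument plus a direct verification of exponential tightness. You instead invoke the full LDP for $\{\sqrt{\varepsilon_k}W^\circ\}$ (Schilder for the bridge) as a black box and transport it across the uniformly small shifts $\sigma^{z_k}\to\sigma^z$. The rate-function bookkeeping $\|h-\sigma^z\|_H^2 = \|h\|_H^2 - |y-x|^2$ is checked correctly, and both the lower-bound inclusion $B_E(g^*,r/2)\subset A-\sigma^{z_k}$ and the upper-bound inclusion $A-\sigma^{z_k}\subset (A-\sigma^z)^\delta$ for large $k$ are valid. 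The one step you correctly flag as nontrivial, $\inf_{\overline{(A-\sigma^z)^\delta}}J^\circ \uparrow \inf_{A-\sigma^z}J^\circ$, goes through exactly as you sketch using goodness and lower semicontinuity of $J^\circ$ together with closedness of $A-\sigma^z$. What your route buys is modularity: a clean reduction to a fixed-center LDP plus a generic ``stability under vanishing translations'' lemma that never touches the Gaussian structure beyond the existence of a good rate function, and which handles closed sets directly without a separate exponential-tightness verification. What the paper's route buys is a tighter match with what is actually used downstream: Dinwoodie's theorems (and the proof of Theorem~\ref{thm: main}) consume precisely the shrinking-ball $\liminf$/$\limsup$ statements, so the paper derives them directly rather than extracting them back out of a full LDP.
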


\begin{proof}
Hsu's proof in \cite{hsu1990brownian} that relies on transition function estimates seems difficult to adapt to establishing the exponential continuity. Instead, we adapt the proof of large deviations for abstract Wiener spaces; cf.  Chapter 8 in \cite{stroock2010probability}. {For the sake of completeness, we provide a self-contained proof.}  

(i). It suffices to show that for every $h \in H$ such that $J_z (h) < \infty$, 
\[
\liminf_{r \downarrow 0} \liminf_{k \to \infty} \varepsilon_k \log R^{\varepsilon_k,z_k}(B_E(h,r)) \ge -J_z (h). 
\]
Set $\bar{h} \in H_0$ by $\bar{h}= h -\sigma^{xy}$ and $h_k \in H_{z_k}$ by $h_k = \bar{h} + \sigma^{x_k,y_k}$. Since $\| h_k - h \|_E \to 0$, $B_E(h_k,r/2) \subset B_E(h,r)$ for large $k$. Observe that
\[
R^{\varepsilon_k,z_k}\big(B_E(h_k,r/2)\big) = R^{\varepsilon_k,0}\big(B_E(\bar{h},r/2)\big)
= \Prob \Big(W^\circ \in B_E\big(\bar{h}/\sqrt{\varepsilon_k},r/(2\sqrt{\varepsilon_k})\big)\Big).
\]
Recall that $(H_0, (\cdot,\cdot)_H)$ is a reproducing kernel Hilbert space for $W^\circ$ (cf. Exercise 2.6.16 in \cite{gine2021mathematical}), whose closure in $E$ agrees with $E_0 := \{ \omega \in E : \omega (0)=\omega(1)=0 \}$. Hence, the pair of spaces $(H_0,E_0)$ coupled with the law of $W^\circ$ constitutes an abstract Wiener space; cf. Chapter 8 in \cite{stroock2010probability}. {Let $E_0^*$ denote the topological dual of $E_0$ with dual norm $\| \cdot \|_{E_0^*}$ and $\langle \omega,\omega^* \rangle$ denote the duality pairing for $\omega \in E_0$ and $\omega^* \in E_0^*$. Since $H_0$ is continuously embedded as a dense subspace of $E_0$ (as $\| \cdot \|_E \le \| \cdot \|_H$ on $H_0$), for each $\omega^* \in E_0^*$, there exists a unique $h_{\omega^*} \in H_0$ with the property that $(h,h_{\omega^*})_H = \langle h,\omega^* \rangle$ for all $h \in H_0$, and the map $\omega^* \mapsto h_{\omega^*}$ is continuous, linear, one-to-one, and onto a dense subspace of $H_0$ (cf. Lemma 8.2.3 in \cite{stroock2010probability}). Let $0 < \delta < r/2$ and  $\omega^* \in E_0^*$ be such that $B_{E_0}(h_{\omega^*},\delta) \subset B_{E_0}(\bar{h},r/2)$. 
Now, an application of the Cameron-Martin formula (cf. Theorem 8.2.9  in \cite{stroock2010probability}) yields 
\[
\begin{split}
R^{\varepsilon_k,0}\big(B_E(\bar{h},r/2)\big) &= R^{\varepsilon_k,0}\big(B_{E_0}(\bar{h},r/2)\big)
\ge R^{\varepsilon_k,0}\big(B_{E_0}(h_{\omega^*},\delta)\big) \\
&=  \Prob \Big(W^\circ-\varepsilon_k^{-1/2}h_{\omega^*} \in B_{E_0}\big(0,\varepsilon_k^{-1/2}\delta)\big)\Big) \\
&= \E\left[ e^{-\varepsilon_k^{-1/2}\langle W^\circ,\omega^*\rangle -\varepsilon_k^{-1}\|h_{\omega^*}\|_{H}^2/2}\bm{1}_{B_{E_0}(0,\varepsilon_k^{-1/2}\delta)}(W^\circ) \right ] \\
&\ge e^{-\delta \varepsilon_k^{-1}\| \omega^* \|_{E_0^*}-\varepsilon_k^{-1}\|h_{\omega^*}\|_{H}^2/2} \Prob \Big (W^\circ \in B_{E_0}(0,\varepsilon_k^{-1/2}\delta) \Big),
\end{split}
\]
so that, by taking $k \to \infty$, 
\[
\liminf_{k \to \infty} \varepsilon_k \log R^{\varepsilon_k,0}\big(B_E(\bar{h},r/2)\big) \ge - \delta \|\omega^*\|_{E_0^*} - \frac{\|h_{\omega^*}\|_H^2}{2}. 
\]
Choosing $\delta = r/4$ and $\omega^* \in E_0^*$ with $\| \bar{h}-h_{\omega^*} \|_{H} < r/4$, and then taking $r \downarrow 0$, one has 
\[
\begin{split}
&\liminf_{r \downarrow 0} \liminf_{k \to \infty} \varepsilon_k \log R^{\varepsilon_k,0}\big(B_E(\bar{h},r/2)\big) \\
&\ge - \frac{\| \bar{h} \|_H^2}{2} = -\frac{1}{2} \left ( \| h \|_{H}^2 - |x-y|^2 \right) = - J_z(h). 
\end{split}
\]
}

(ii). We first show that for every $h \in E$,
\begin{equation}
\limsup_{r \downarrow 0} \limsup_{k \to \infty} \varepsilon_k \log R^{\varepsilon_k,z_k} (B_E(h,r)) 
\le -J_z(h). 
\label{eq: upper bound}
\end{equation}
{Using the same notation as in (i), we have $B_E(h,r) \subset B_E(h_k,2r)$ for large $k$ and 
\[
\begin{split}
R^{\varepsilon_k,z_k}\big(B_E(h_k,2r)\big) &= \Prob \Big(W^\circ \in B_E\big(\bar{h}/\sqrt{\varepsilon_k},2r/\sqrt{\varepsilon_k})\big)\Big) \\
&= \E \left [ e^{-\varepsilon_k^{-1/2}\langle W^\circ,\omega^*\rangle +\varepsilon_k^{-1/2}\langle W^\circ,\omega^*\rangle} \bm{1}_{B_{E_0}(\bar{h}/\sqrt{\varepsilon_k},2r/\sqrt{\varepsilon_k})}(W^\circ) \right ] \\
&\le e^{-\varepsilon_k^{-1}(\langle \bar{h},\omega^* \rangle -2r\|\omega^*\|_{E_0^*})}\E\left [ e^{\varepsilon_k^{-1/2}\langle W^\circ,\omega^*\rangle}\right ] \\
&= e^{-\varepsilon_k^{-1}(\langle \bar{h},\omega^* \rangle -\| h_{\omega^*}\|_H^2/2 - 2r\|\omega^*\|_{E_0^*})}
\end{split}
\]
for all $\omega^* \in E_0^*$, where we used the fact that $\langle W^\circ,\omega^*\rangle \sim N(0,\|h_{\omega^*}\|_{H}^2)$. 
This yields
\[
\begin{split}
\limsup_{r \downarrow 0} \limsup_{k \to \infty} \varepsilon_k \log R^{\varepsilon_k,z_k}\big(B_E(h_k,2r)\big) 
&\le -\sup_{\omega^* \in E_0^*} \left ( \langle \bar{h},\omega^* \rangle -\frac{\| h_{\omega^*}\|_H^2}{2} \right ) \\
&=
\begin{cases}
-\frac{\|\bar{h}\|_H^2}{2} & \text{if $\bar{h} \in H_0$}, \\
-\infty & \text{otherwise}.
\end{cases}
\end{split}
\]
}
Now, $\bar{h} \in H_0$ if and only if $h \in H_z$, and $\| \bar{h} \|_H^2=\| h \|_H^2 - |x-y|^2$, which leads to (\ref{eq: upper bound}). 

Given (\ref{eq: upper bound}), it is standard to show that (\ref{eq: upper bound zero}) holds for every compact set $A \subset E$. It remains to verify exponential tightness for $\{ R^{\varepsilon_k,z_k} \}_{k \in \NN}$ (cf. Lemma 1.2.18 in \cite{dembo2009large}), i.e., for every $\alpha < \infty$, there exists a compact set $K \subset E$ such that $\limsup_{k \to \infty} \varepsilon_k \log R^{\varepsilon_k,z_k} (K^c) < -\alpha$.
    {We first note that the exponential tightness holds for $\{ R^{\varepsilon_k,0} \}_{k \in \NN}$. Indeed, by Corollary 8.3.10 in \cite{stroock2010probability}, one can construct a separable Banach space $F$ that is continuously embedded in $E_0$ as a measurable subset with the properties that  $\Prob (W^\circ \in F)=1$, bounded subsets of $F$ are totally bounded in $E_0$, and $(H_0,F)$ coupled with the restriction of the law of $W^\circ$ on $F$ is another abstract Wiener space. Then, choosing $K_0$ to be the $E_0$-closure of a  ball in $F$ with large enough radius satisfies $\limsup_{k \to \infty} \varepsilon_k \log R^{\varepsilon_k,0} (K_0^c) < -\alpha$ by Fernique's theorem (cf. Theorem 8.2.1 in \cite{stroock2010probability}), and $K_0$ is compact in $E_0$ by construction.}  

Now, for an arbitrary bounded neighborhood $O \subset \R^{2d}$ of $z$, set $K_1=\{ \sigma^{x'y'} : (x',y') \in O \}$.
By the Ascoli-Arzel\`{a} theorem, the set $K=\{ \omega + \omega' : \omega \in K_0, \omega' \in K_1 \}$
is relatively compact in $E$, and such that $R^{\varepsilon_k,z_k}(K) {\ge} R^{\varepsilon_k,0} (K_0)$ for large $k$.
{Indeed, $\sigma^{z_k} \in K_1$  for large $k$, so if $\omega \in K_0$, then $\omega + \sigma^{z_k} \in K$, which implies $R^{\varepsilon_k,0}(K_0) \le R^{\varepsilon_k,0} (\omega+\sigma^{z_k} \in K) = R^{\varepsilon_k,z_k}(K)$.} 
This yields exponential tightness for $\{ R^{\varepsilon_k,z_k} \}_{k \in \NN}$. 
\end{proof}

Given the exponential continuity, the following corollary concerning large deviations of mixtures of Brownian bridges follows immediately from Theorems 2.1 and 2.2 in \cite{dinwoodie1992large}.
The result might be of independent interest.

\begin{corollary}[Large deviations for mixtures of Brownian bridges]
\label{prop: LDP B bridge}
Let $\gamma$ be a Borel probability measure on $\R^{2d}$. Consider the mixture distribution
$Q^{\varepsilon} = \int R^{\varepsilon,xy} \, d\gamma(x,y)$. 
Then the following hold.

\begin{enumerate}
    \item[(i)] The function 
\begin{equation}
J(h)  := \inf_{(x,y) \in \supp (\gamma)} J_{xy}(h)  \\
=
\frac{\|h\|_H^2}{2} - c(h(0),h(1)) + \iota_{\supp(\gamma)}(h(0),h(1))
\label{eq: rate function}
\end{equation}
is lower semicontinuous from $E$ into $[0,\infty]$.
\item[(ii)] For every open set $A \subset E$, 
\[
\liminf_{k \to \infty} \varepsilon_k\log Q^{\varepsilon_k} (A) \ge -\inf_{h \in A} J(h).
\]
\item[(iii)] If $\gamma$ is compactly supported, then for every closed set $A \subset E$,
\[
\limsup_{k \to \infty} \varepsilon_k\log Q^{\varepsilon_k} (A) \le -\inf_{h \in A} J(h),
\]
and $J$ is a good rate function.
\end{enumerate}
\end{corollary}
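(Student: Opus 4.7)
The plan has two parts: (a) verify the explicit formula for $J$ in (i) and its lower semicontinuity, and then (b) derive the LDP bounds (ii) and (iii) from Proposition \ref{prop: exponential} via the standard mixture-LDP technique (which is exactly what Theorems 2.1--2.2 of \cite{dinwoodie1992large} formalize). The key uniformity-in-$z$ input they require is supplied precisely by the exponential continuity of Proposition \ref{prop: exponential}.

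For (i), I would observe that $J_{xy}(h) = +\infty$ unless $(h(0),h(1)) = (x,y)$, so the infimum over $(x,y) \in \supp(\gamma)$ is attained at $(h(0),h(1))$ whenever this pair lies in $\supp(\gamma)$ and equals $+\infty$ otherwise; Cauchy--Schwarz gives $|h(1)-h(0)|^2 \le \|h\|_H^2$, so $J \ge 0$. Lower semicontinuity splits cleanly into three standard pieces: $\|\cdot\|_H^2$ is lower semicontinuous on $E$ (via weak $L^2$-compactness for $\dot h_n$ along $E$-convergent sequences with bounded $H$-norms), $c$ and the endpoint evaluation $h \mapsto (h(0),h(1))$ are continuous, and $\supp(\gamma)$ is closed.

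For the lower bound (ii), fix an open $A \subset E$ and $h \in A$ with $J(h) < \infty$, so $z^\ast := (h(0),h(1)) \in \supp(\gamma)$. Choosing $r > 0$ with $B_E(h,r) \subset A$ and, for each $\delta > 0$, the neighborhood $U_\delta = B_{\R^{2d}}(z^\ast,\delta)$ (which has $\gamma(U_\delta) > 0$), I would use
\[
Q^{\varepsilon_k}(A) \ge \gamma(U_\delta) \inf_{z \in U_\delta} R^{\varepsilon_k,z}(B_E(h,r))
\]
and control the infimum as follows: pick $z_k \in \bar U_\delta$ nearly realizing it; by compactness extract $z_{k_j} \to z_\infty \in \bar U_\delta$; Proposition \ref{prop: exponential}(i) yields $\liminf_j \varepsilon_{k_j} \log R^{\varepsilon_{k_j},z_{k_j}}(B_E(h,r)) \ge -\inf_{h' \in B_E(h,r)} J_{z_\infty}(h')$, which a subsequence argument upgrades to a liminf along the whole sequence bounded below by $-\sup_{z \in \bar U_\delta}\inf_{h' \in B_E(h,r)} J_z(h')$. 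Sending $\delta \downarrow 0$ then $r \downarrow 0$ and invoking lower semicontinuity of $J_{z^\ast}$ (which agrees with $J$ at $h$) gives $\liminf_k \varepsilon_k \log Q^{\varepsilon_k}(A) \ge -J(h)$, and taking $\inf$ over $h \in A$ finishes (ii).

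For (iii), assume $\supp(\gamma)$ is compact. Exponential tightness transfers from the proof of Proposition \ref{prop: exponential}(ii): a common compact $K = K_0 + \{\sigma^z : z \in \supp(\gamma)\}$ works because $R^{\varepsilon_k,z}(K^c) \le R^{\varepsilon_k,0}(K_0^c)$ uniformly in $z \in \supp(\gamma)$. So it suffices to handle compact $A$. Cover $\supp(\gamma)$ by finitely many small balls $U_1,\dots,U_N$ around $z^i$; by Proposition \ref{prop: exponential}(ii) applied to arbitrary selections $z_k^i \in \bar U_i$, $\limsup_k \varepsilon_k \log \sup_{z \in U_i} R^{\varepsilon_k,z}(A) \le -\inf_{h \in A} J_{z^i}(h) + o(1)$, and summing the $N$ contributions and shrinking the cover yields $\limsup_k \varepsilon_k \log Q^{\varepsilon_k}(A) \le -\inf_{h \in A} J(h)$. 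Goodness of $J$ is then immediate: sublevel sets $\{J \le \alpha\}$ have endpoints in the compact set $\supp(\gamma)$ and $\|h\|_H$ uniformly bounded, so by Ascoli--Arzel\`a they are relatively compact in $E$, and combined with lower semicontinuity they are compact.

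The main technical obstacle throughout is precisely the uniformity over the base point $z$ in the mixture integral; once the exponential continuity of Proposition \ref{prop: exponential} is available, everything above is essentially formal bookkeeping, and the result could alternatively be quoted verbatim from \cite{dinwoodie1992large}.
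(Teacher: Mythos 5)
Your proposal is essentially correct and follows the same route as the paper: derive the mixture LDP from the exponential continuity of Proposition \ref{prop: exponential}, which is precisely the hypothesis of Theorems 2.1--2.2 of Dinwoodie (1992). The paper simply cites Dinwoodie for parts (ii)--(iii) and gives a direct Banach--Alaoglu argument for lower semicontinuity of $\|\cdot\|_H^2$ in (i); you instead unfold the Dinwoodie argument in full, which is the same idea but more verbose. Two minor remarks. First, in your lower-bound argument the step ``sending $\delta\downarrow 0$ then $r\downarrow 0$'' needs a small supplement: the quantity $\sup_{z\in \bar U_\delta}\inf_{h'\in B_E(h,r)}J_z(h')$ is controlled from above by the explicit perturbations $h_z = h + \sigma^{z-z^*}$, giving $\sup_{z\in\bar U_\delta} J_z(h_z)\to J_{z^*}(h)$ by continuity of $z\mapsto (\|h+\sigma^{z-z^*}\|_H^2, c(z))$; Dinwoodie's proof (and the paper's later use of it in the proof of Theorem 3.1) instead works with the quantitative form of exponential continuity---for each $\delta$ one has $\varepsilon_k\log R^{\varepsilon_k,z'}(A)\ge -\inf_h J_z(h)-\delta$ for $z'$ in a small neighborhood of $z$ and $k$ large---which sidesteps this point. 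Second, for goodness of $J$ in (iii) you give a direct Ascoli--Arzel\`a argument on sublevel sets, whereas the paper deduces it from exponential tightness via Lemma 1.2.18 of Dembo--Zeitouni; both are valid and neither requires more than what is already established.
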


\begin{proof}
(i).  Set $F = \{ (h,z) \in E \times \R^{2d} : (h(0),h(1)) = z \}$. The rate function $J_{z}(h)$ can be expressed as 
\begin{equation}
J_{z}(h) = \frac{\|h\|_H^2}{2} - c(x,y) + \iota_{F}(h,z) = \frac{\|h\|_H^2}{2} - c(h(0),h(1)) + \iota_{F}(h,z).
\label{eq: J function}
\end{equation}
This yields the second expression of the $J$ function in (\ref{eq: rate function}). Since $\supp(\gamma)$ is closed by definition, what remains is to verify that the mapping $E \ni h \mapsto \| h \|_{H}^2/2$ is lower semicontinuous. It suffices to show that the set $\{ h \in H : \| h \|_H \le 1 \}$ is closed  in $E$. Let $\{ h_n \}_{n \in \NN} \subset H$ be a sequence with $ \| h_n \|_H \le 1$ for all $n \in \NN$ and $h_n \to h_\infty$ in $E$. We may assume without loss of generality that $h_n(0)=h_\infty(0)=0$. Since $\tilde{H}= \{ h \in H : h(0) = 0\}$ endowed with inner product $( \cdot, \cdot )_H$ is a Hilbert space, by the Banach-Alaoglu theorem, there exists a subsequence $h_{n'}$ such that $h_{n'} \to \tilde{h}$ weakly in $\tilde{H}$ for some $\tilde{h} \in \tilde{H}$ with $\| \tilde{h} \|_H \le 1$, i.e., $\lim_{n'} (h_{n'},g)_H = (\tilde{h},g)_H$ for all $g \in \tilde{H}$. This implies $h_\infty = \tilde{h}$ (choose appropriate $g$) and $\| h_\infty \|_{H} \le 1$, as desired.  

(ii). This follows from Proposition \ref{prop: exponential} (i) above and Theorem 2.1 in \cite{dinwoodie1992large}.

(iii). The large deviation upper bound follows from Proposition \ref{prop: exponential} (ii) above and Theorem 2.2 in \cite{dinwoodie1992large}.
 Finally, we shall verify that $J$ has compact level sets, but this follows from Lemma 1.2.18 in \cite{dembo2009large}, since
 the argument in Proposition \ref{prop: exponential} (ii) indeed shows that $\{ Q^{\varepsilon_k} \}_{k \in \NN}$ is exponentially tight (replace $O$ by $\supp(\gamma)$).

\end{proof}

\subsection{Proof of Theorem \ref{thm: main}}
Set $\phi(z) = c(x,y) - \psi^c(x) - \psi(y)$ for $z = (x,y) \in \calX_o \times \calY_o$.

(i). It suffices to show that for any $h \in e_{01}^{-1}(\calX_o \times \calY_o) \cap H$ and $r > 0$, 
\[
\liminf_{k \to \infty} \varepsilon_k \log P^{\varepsilon_k}(B_E(h,r)) \ge - I(h). 
\]
Set $z = (h(0),h(1)) \in \calX_o \times \calY_o$.
By exponential continuity of $\{ R^{\varepsilon_k,z} \}$ established in Proposition \ref{prop: exponential}, for every $\delta >0$, one can choose an open neighborhood $O_z \subset \calX_o \times \calY_o$ of $z$ and a positive integer $k_z$ such that for every $z' \in O_z$,
\[
\varepsilon_k \log R^{\varepsilon_k,z'}(B_E(h,r)) \ge -\inf_{h' \in B_E(h,r)} J_z (h')- \delta, \quad k \ge k_z.
\]
{
For if not, for the open ball $O_i$ in $\calX_\circ \times \calY_\circ$ with center $z$ and radius $i^{-1}$, one can find $z_i' \in O_i$ and a large enough positive integer $k_i$ (with $k_i > k_{i-1}$) such that 
\[
\varepsilon_{k_i} \log R^{\varepsilon_{k_i},z_i'}(B_E(h,r)) < -\inf_{h' \in B_E(h,r)} J_z (h')- \delta,
\]
but this contradicts the exponential continuity (as $z_i' \to z$).} 
Hence,
\[
\begin{split}
P^{\varepsilon_k} (B_E(h,r)) &\ge \int_{O_z} e^{\varepsilon_k^{-1} \cdot \varepsilon_k \log R^{\varepsilon_k,z'}(B_E(h,r)) } d\pi_{\varepsilon_k}(z') \\
&\ge e^{-\varepsilon_k^{-1}(\inf_{h' \in B_E(h,r)} J_z (h')+ \delta)} \pi_{\varepsilon_k}(O_z).
\end{split}
\]
Invoking Corollary 4.7 in \cite{bernton2021entropic}, we arrive at
\[
\begin{split}
\liminf_{k \to \infty} \varepsilon_k \log P^{\varepsilon_k} (B_E(h,r)) &\ge - \inf_{h' \in B_E(h,r)} J_z (h')- \delta - \inf_{z' \in O_z}\phi(z') \\
&\ge -(J_z(h)+\phi(z)) - \delta \\
&= -I(h) - \delta,
\end{split}
\]
establishing the desired claim. 

(ii). We first observe that for $A = e_{01}^{-1}(C)$ with  $C \subset \calX_o \times \calY_o$ compact, $P^{\varepsilon}(A) = \int_{C} R^{\varepsilon,z}(A) \, d\pi_{\varepsilon}(z)$. 
Taking into account Proposition 4.5 in \cite{bernton2021entropic}, extend $\phi$ to $\calX \times \calY$ as
\[
\phi(x,y) = \sup_{\ell \ge 2} \sup_{\{ (x_i,y_i) \}_{i=1}^\ell \subset \supp(\pi_o)} \sup_{\tau} \sum_{i=1}^\ell c(x_i,y_i) - \sum_{i=1}^\ell c(x_i,y_{\tau(i)}),
\]
where $\sup_{\tau}$ is taken over all permutations of $\{ 1,\dots,\ell \}$ and $(x_1,y_1)=(x,y)$. The function $\phi: \calX \times \calY \to [0,\infty]$ is lower semicontinuous (Lemma 4.2 in \cite{bernton2021entropic}) and agrees with the previous definition of $\phi$ on $\calX_o \times \calY_o$.  
Let $\delta > 0$ be given. For every $z \in C$, 
by exponential continuity of $\{ R^{\varepsilon_k,z} \}$ established in Proposition \ref{prop: exponential}, one can choose a bounded open neighborhood $O_z \subset \calX \times \calY$ of $z$ and a positive integer $k_z$ such that for every $z' \in O_z$,
\[
\varepsilon_k \log R^{\varepsilon_k,z'}(A) \le -\inf_{h \in A} J_z (h) + \delta, \quad k \ge k_z.
\]
Furthermore, since $\phi$ is lower semicontinuous, by choosing $O_z$ smaller if necessary, one has
\[
\inf_{z' \in \bar{O}_z} \phi(z') \ge \phi(z) - \delta,
\]
where $\bar{O}_z$ denotes the closure of $O_z$ in $\calX \times \calY$. 
By compactness of $C$, one can find $z_1,\dots,z_N \in C$ such that $C \subset \bigcup_{i=1}^N O_{z_i}$, so 
\[
P^{\varepsilon_k}(A) \le \sum_{i=1}^N \int_{O_{z_i}} e^{\varepsilon_k^{-1} \cdot\varepsilon_k \log R^{\varepsilon_k,z}(A)}\, d\pi_{\varepsilon_k}(z) \le \sum_{i=1}^N e^{\varepsilon_k^{-1} (-\inf_{h \in A} J_{z_i} (h) + \delta + \varepsilon_k \log \pi_{\varepsilon_k}(\bar{O}_{z_i}))}.
\]
We invoke the following elementary result, whose proof follows from  Jensen's inequality; cf. \cite{chatterjee2005error}. 
\begin{lemma}[Smooth max function]
\label{lem: smooth max}
For $\beta > 0$ and $v = (v_1,\dots,v_N) \in \R^N$, consider a smooth max function $m_\beta(v) = \beta^{-1}\log (\sum_{i=1}^N e^{\beta v_i})$. Then, for every $v \in \R^N$, we have $\max_{1 \le i \le N}v_i \le m_\beta (v) \le \max_{1 \le i \le N}v_i + \beta^{-1}\log N$. 
\end{lemma}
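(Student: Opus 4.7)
The plan is to obtain both inequalities by sandwiching the sum $\sum_{i=1}^N e^{\beta v_i}$ between two elementary expressions in terms of $M := \max_{1 \le i \le N} v_i$, and then applying the monotone map $x \mapsto \beta^{-1}\log x$. No machinery beyond monotonicity of $\log$ and the fact that $\beta > 0$ keeps the scaling increasing will be needed.

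For the lower bound, observe that the index $i^\star$ attaining the maximum contributes the term $e^{\beta M}$ to the sum, and every other term is nonnegative, so $\sum_{i=1}^N e^{\beta v_i} \ge e^{\beta M}$. Taking $\beta^{-1}\log$ on both sides yields $m_\beta(v) \ge M$. For the upper bound, each summand is dominated by the maximal one: $e^{\beta v_i} \le e^{\beta M}$ for every $i$, so $\sum_{i=1}^N e^{\beta v_i} \le N e^{\beta M}$, and applying $\beta^{-1}\log$ gives $m_\beta(v) \le M + \beta^{-1}\log N$. Combining these two estimates produces the claimed sandwich.

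The hint toward Jensen's inequality in the paper's citation corresponds to an alternative (weaker) companion bound: by convexity of $x \mapsto e^{\beta x}$, one has $e^{\beta \bar v} \le N^{-1}\sum_{i=1}^N e^{\beta v_i}$, where $\bar v$ is the arithmetic mean of the $v_i$'s, which rearranges to $m_\beta(v) \ge \bar v + \beta^{-1}\log N$. Since $\bar v \le M$, this is strictly weaker than the stated lower bound but serves as a useful sanity check; it also explains the appearance of the additive term $\beta^{-1}\log N$ on the upper side. There is no real obstacle in the argument; the role of the lemma in the subsequent large deviation calculation is precisely that the overhead $\beta^{-1}\log N$ is negligible on the logarithmic scale when $\beta = \varepsilon_k^{-1} \to \infty$ and $N$ is held fixed, so that a finite sum of exponentially small probabilities can be replaced by its worst term in the $\limsup_{k \to \infty}\varepsilon_k \log$ estimate driving the upper bound (\ref{eq: LDP upper}).
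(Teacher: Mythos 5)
Your sandwich argument is correct and is the standard elementary proof of this bound; the paper does not actually display a proof but only cites Jensen's inequality, and your termwise bounds $e^{\beta M}\le\sum_i e^{\beta v_i}\le Ne^{\beta M}$ followed by $\beta^{-1}\log$ are exactly the clean way to do it. Your aside on Jensen is apt: the entropy-style route (write $\log\sum_i e^{\beta v_i}=H(p)+\beta\sum_i p_i v_i$ with $p_i\propto e^{\beta v_i}$ and use $H(p)\le\log N$) does recover both bounds, but it is neither shorter nor more illuminating here, so there is nothing lost by the elementary approach.
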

Using the preceding lemma, and combining Corollary 4.3 in \cite{bernton2021entropic}, we have
\[
\begin{split}
\varepsilon_k\log P^{\varepsilon_k}(A) &\le \max_{1 \le i \le N} \big \{  -\inf_{h \in A} J_{z_i} (h) + \delta + \varepsilon_k \log \pi_{\varepsilon_k}(\bar{O}_{z_i}) \big \} + \varepsilon_k \log N \\
&\le \max_{1 \le i \le N} \big \{  -\inf_{h \in A} J_{z_i} (h)  - \inf_{z \in \bar{O}_{z_i}}\phi(z) \big \} + \delta +o(1) \\
&\le \max_{1 \le i \le N} \big \{  -\inf_{h \in A} J_{z_i} (h) - \phi (z_i) \big \} + 2\delta + o(1) \\
&\le -\inf_{h \in A} \inf_{z \in C} \{ J_z(h)+\phi(z)\} + 2\delta + o(1)\\
&=-\inf_{h \in A} \inf_{z \in C} \{ I(h) + \iota_{\{ z \}}(h(0),h(1)) \} + 2\delta + o(1) \\
&= -\inf_{h \in A} I(h) + 2 \delta +o(1),
\end{split}
\]
where we used the fact that $(h(0),h(1)) \in C$ whenever $h \in A$ by our choice of $A$. This completes the proof. 
\qed

\subsection{Direct proof of Corollary \ref{thm: main2}}
We first prove the following technical result concerning convergence of EOT potentials. 
\begin{lemma}[Convergence of EOT potentials]
\label{lem: uniqueness}
Suppose that $\calX$ and $\calY$ are compact and one of them agrees with the closure of a connected open set. Then, under normalization $\int \psi^c \, d\mu_0 = \int \psi \, d\mu_1$, the OT potential $\psi$ from $\mu_1$ to $\mu_0$ is everywhere unique, and $(\psi^c,\psi)$ are bounded and Lipschitz on $\calX \times \calY$.  Furthermore, let $(\varphi_{\varepsilon},\psi_{\varepsilon})$ be the unique EOT potentials under normalization $\int \varphi_{\varepsilon} \, d\mu_0 = \int \psi_{\varepsilon} \, d\mu_1$, then for any sequence $\varepsilon_{k} \downarrow 0$, one has $\varphi_{\varepsilon_k} \to \psi^c$ and $\psi_{\varepsilon_{k}} \to \psi$ uniformly on $\calX$ and $\calY$, respectively. 
\end{lemma}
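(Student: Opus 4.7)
The plan is to establish uniform Lipschitz and boundedness estimates on the EOT potentials $(\varphi_\varepsilon,\psi_\varepsilon)$, extract uniformly convergent subsequences via Arzel\`a--Ascoli, identify each subsequential limit as an OT potential pair via a Laplace principle applied to the Schr\"{o}dinger system, and conclude by the uniqueness statement that the full sequence converges to $(\psi^c,\psi)$. The regularity of $(\psi^c,\psi)$ on $\calX \times \calY$ will then follow as a byproduct.

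For the uniform regularity, rewrite the Schr\"{o}dinger system (\ref{eq: S system}) as
\[
\psi_\varepsilon(y) = -\varepsilon \log \int e^{(\varphi_\varepsilon(x)-c(x,y))/\varepsilon}\, d\mu_0(x).
\]
Differentiation under the integral represents $\nabla_y \psi_\varepsilon(y)$ as an average (against a probability kernel on $\calX$) of $\nabla_y c(x,y) = y-x$, so compactness of $\calX \times \calY$ yields $\|\nabla \psi_\varepsilon\|_\infty \le L := \sup_{(x,y) \in \calX \times \calY} |y-x|$, and symmetrically for $\varphi_\varepsilon$. For uniform boundedness, use the EOT duality identity
\[
\int \varphi_\varepsilon \, d\mu_0 + \int \psi_\varepsilon \, d\mu_1 = \int c \, d\pi_\varepsilon + \varepsilon \calH(\pi_\varepsilon | \mu_0 \otimes \mu_1) =: 2\alpha_\varepsilon,
\]
whose right-hand side lies in $[0, \int c \, d(\mu_0 \otimes \mu_1)]$ by primal optimality of $\pi_\varepsilon$ tested against the feasible competitor $\mu_0 \otimes \mu_1$; combined with the $L$-Lipschitz property on compact marginals and the common mean $\alpha_\varepsilon$, this gives $\|\varphi_\varepsilon\|_\infty \vee \|\psi_\varepsilon\|_\infty \le C$ uniformly in $\varepsilon$.

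By Arzel\`a--Ascoli, any subsequence of $\varepsilon_k$ has a further subsequence along which $(\varphi_{\varepsilon_{k_j}},\psi_{\varepsilon_{k_j}}) \to (\varphi_\ast,\psi_\ast)$ uniformly on $\calX$ and $\calY$, with Lipschitz limits. A standard Laplace principle (uniform convergence of the exponent plus $\varepsilon \downarrow 0$ and $\supp(\mu_0)=\calX$) gives $\psi_\ast(y) = \inf_{x \in \calX} \{c(x,y) - \varphi_\ast(x)\} = \varphi_\ast^c(y)$ and symmetrically $\varphi_\ast = \psi_\ast^c$; in particular $\psi_\ast$ is $c$-concave. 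Passing to the limit in the duality identity, using $\pi_\varepsilon \to \pi_o$ weakly from Proposition 3.2 of \cite{bernton2021entropic} with continuity of $c$ on compact $\calX \times \calY$, together with $\varepsilon \calH(\pi_\varepsilon | \mu_0 \otimes \mu_1) \to 0$ (a $\Gamma$-convergence fact enabled by Assumption \ref{asp: finite entropy}: build mollified competitors $\pi^{(\delta)}$ with $\calH(\pi^{(\delta)} | \mu_0 \otimes \mu_1) < \infty$ and $\int c \, d\pi^{(\delta)} \to \int c \, d\pi_o$, then send $\varepsilon \to 0$ before $\delta \to 0$), yields
\[
\int \varphi_\ast \, d\mu_0 + \int \psi_\ast \, d\mu_1 = \int c \, d\pi_o,
\]
so $(\varphi_\ast,\psi_\ast) = (\psi_\ast^c,\psi_\ast)$ is an OT dual optimizer satisfying $\int \psi_\ast^c \, d\mu_0 = \int \psi_\ast \, d\mu_1$.

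Uniqueness on $\calX \times \calY$ is handled by invoking Santambrogio's Theorem 7.18 (Case (A) in the discussion after Assumption \ref{asp: uniqueness}): compactness of $\calX,\calY$ and connectedness of the support of one marginal force the $c$-concave Kantorovich potentials to be unique up to a shared additive constant on their respective supports, which is then pinned down by the normalization. This identifies $(\varphi_\ast,\psi_\ast) = (\psi^c,\psi)$ independently of the subsequence, so the full sequence converges uniformly. Boundedness and Lipschitzness of $(\psi^c,\psi)$ on $\calX \times \calY$ follow either by passing to the limit in the EOT bounds or directly from the $c$-transform formula $\psi(y) = \inf_{x \in \calX} \{c(x,y) - \psi^c(x)\}$ with the same gradient bound $L$. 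The main technical obstacle is the small-noise vanishing $\varepsilon \calH(\pi_\varepsilon | \mu_0 \otimes \mu_1) \to 0$, which transfers convergence of the primal cost to convergence of the dual value and is where Assumption \ref{asp: finite entropy} is essentially used.
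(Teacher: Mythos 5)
Your proof is correct, and its overall skeleton matches the paper's: uniform Lipschitz and boundedness for $(\varphi_\varepsilon,\psi_\varepsilon)$ from the Schr\"{o}dinger system, Arzel\`a--Ascoli for subsequential uniform limits, identification of the limit as a $c$-concave OT dual optimizer, and uniqueness to pin it down. The main divergence is in the identification step. You prove $c$-concavity of the limit via a Laplace-principle asymptotic on the Schr\"{o}dinger system, then identify it as an optimal dual pair by passing to the limit in $\int\varphi_\varepsilon\,d\mu_0+\int\psi_\varepsilon\,d\mu_1=\int c\,d\pi_\varepsilon+\varepsilon\calH(\pi_\varepsilon\,|\,\mu_0\otimes\mu_1)$, which requires you to import the $\Gamma$-convergence fact $\varepsilon\calH(\pi_\varepsilon\,|\,\mu_0\otimes\mu_1)\to 0$ (block/mollified competitors). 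The paper instead proves $c$-concavity by Fatou applied to $\int e^{(\varphi_\varepsilon+\psi_\varepsilon-c)/\varepsilon}\,d(\mu_0\otimes\mu_1)=1$ together with $\psi_\varepsilon^c\le\varphi_\varepsilon$, $\varphi_\varepsilon^c\le\psi_\varepsilon$ passed to the limit, and then closes via a weak-duality sandwich: since $\int c\,d\pi_\varepsilon\le\int\varphi_\varepsilon\,d\mu_0+\int\psi_\varepsilon\,d\mu_1$ while $\bar\varphi+\bar\psi\le c$ makes the limiting dual value $\le\int c\,d\pi_o$ and $\int c\,d\pi_{\varepsilon_k}\to\int c\,d\pi_o\le$ the limiting dual value, everything is squeezed to equality. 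This makes $\varepsilon\calH(\pi_\varepsilon\,|\,\mu_0\otimes\mu_1)\to 0$ a corollary rather than a prerequisite, so what you flag as ``the main technical obstacle'' is in fact avoidable; the block-approximation lemma is true but an unnecessary detour here. Two smaller differences: the paper derives boundedness from Jensen's inequality applied directly to the Schr\"{o}dinger system (giving $\psi_\varepsilon^c\le\varphi_\varepsilon\le\sup c$ etc.) rather than from the duality identity plus the shared mean, and it gives a self-contained uniqueness proof where you cite Santambrogio's Theorem 7.18 (the paper notes the citation is equally acceptable). Both buy the same conclusion; the paper's route is a bit leaner.
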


\begin{proof}
The lemma follows from Proposition 7.18 in \cite{santambrogio15} and Proposition 3.2 in \cite{nutz2021entropic}. We include a self-contained proof for completeness. First, under the current assumption,  we observe that any OT potential $\psi$ is bounded and Lipschitz on $\calY$. We have seen that the support of any OT plan $\pi$ is contained in $\partial^c \psi$, so any  $(x_0,y_0) \in \supp(\pi)$ satisfies $\psi(y_0) > -\infty$ and $\psi^c(x_0) > -\infty$, which entails  $\psi = \psi^{cc} \le \sup_{\calX \times \calY} c -\psi^c(x_0)$ and $\psi \ge - \sup_{\calX} \psi^c \ge -\sup_{\calX \times \calY} c + \psi(y_0)$. Lipschitz continuity follows from $c$-concavity. For the uniqueness, suppose $\mathrm{int}(\calY)$ is connected. Recall that the projections of $\supp(\pi)$ onto $\calX$ and $\calY$ agree with $\calX$ and $\calY$, respectively (cf. Remark \ref{rem: main}).  For any OT potential $\psi$ and  any $y_0 \in \mathrm{int} (\calY)$, one can find $x_0 \in \calX$ such that $\psi^c(x_0) + \psi(y_0) =c(x_0,y_0)$, i.e., $c(x_0,\cdot) - \psi(\cdot)$ is minimized at $y_0$, which entails $\nabla \psi(y_0) = \nabla_y c(x_0,y_0)$ as long as $\psi$ is differentiable at $y_0$. We have shown that $\nabla \psi$ is uniquely determined Lebesgue a.e. on $\mathrm{int} (\calY)$. As $\mathrm{int}(\calY)$ is connected, $\psi$ is uniquely determined on $\mathrm{int}(\calY)$ up to additive constants. By continuity, $\psi$ is uniquely determined on $\calY$ up to additive constants. If $\mathrm{int}(\calX)$ is connected, then the OT potential $\varphi$ from $\mu_0$ to $\mu_1$ is unique up to additive constants. If $\psi$ is an OT potential from $\mu_1$ to $\mu_0$, then by the definition of the $c$-transform, one must have $\int (\psi - \varphi^c) \, d\mu_1 = 0$, which yields $\psi = \varphi^c$ $\mu_1$-a.e. By continuity, we have $\psi = \varphi^c$ on $\calY$. 

 For the latter result, by the Schr\"{o}dinger system (\ref{eq: S system}) and Jensen's inequality, one has $\psi_{\varepsilon}^c \le \varphi_{\varepsilon} \le \sup_{\calX \times \calY} c$ and $\varphi_{\varepsilon}^c \le \psi_{\varepsilon} \le \sup_{\calX \times \calY} c$, so the EOT potentials are uniformly bounded by $\sup_{\calX \times \calY} c$.
 Furthermore, under our assumption, the EOT potentials extend to smooth functions on $\R^d$ by the Schr\"{o}dinger system, and directly calculating derivatives shows that $| \nabla \varphi_{\varepsilon} | \vee | \nabla \psi_{\varepsilon}| \le C$ on $\calX \times \calY$ for some constant $C$ independent of $\varepsilon$. 
 Hence, the Ascoli-Arzel\`{a} theorem applies, and after passing to a subsequence, $\varphi_{\varepsilon_k} \to \bar{\varphi}$ and $\psi_{\varepsilon_k} \to \bar{\psi}$ uniformly on $\calX$ and $\calY$, respectively. By the identity $\int e^{(\varphi_{\varepsilon}+\psi_{\varepsilon}-c)/\varepsilon} \, d(\mu_0 \otimes \mu_1) = 1$ and Fatou's lemma, one has $\bar{\varphi} + \bar{\psi} \le c$ $(\mu_0 \otimes \mu_1)$-a.e. By continuity, $\bar{\varphi} + \bar{\psi} \le c$ on $\calX \times \calY$, but $\bar{\psi}^c \le \bar{\varphi}$ and $\bar{\varphi}^c \le \bar{\psi}$ by construction, so $\bar{\varphi} = \bar{\psi}^c$ and $\bar{\psi} = \bar{\varphi}^c$, i.e., $(\bar{\varphi},\bar{\psi})$ are $c$-concave.
Now, using duality, for any OT plan $\pi$,  $ \int c \, d\pi \le \lim_{k} \big( \int c\, d\pi_{\varepsilon_k} + \varepsilon_k H(\pi_{\varepsilon_k} | \mu_0 \otimes \mu_1) \big)= \int \bar{\varphi} \, d\mu_0 + \int \bar{\psi} \, d\mu_1 \le \int c \, d\pi$, so $(\bar{\varphi},\bar{\psi})$ are OT potentials. Since $\int \bar{\varphi} \, d\mu_0 = \int \bar{\psi} \, d\mu_1$ by construction, by the uniqueness result, $\bar{\varphi} = \psi^c$ and $\bar{\psi}=\psi$.
Finally, by uniqueness of the limits, along the original sequence, $\varphi_{\varepsilon_k} \to \psi^c$ and $\psi_{\varepsilon_k} \to \psi$ uniformly on $\calX$ and $\calY$, respectively.
\end{proof}

\begin{proof}[Direct proof of Corollary \ref{thm: main2}]

Set $S = e_{01}^{-1} (\calX \times \calY) \subset E$. Recall $\bar{R}^{\varepsilon} = \int R^{\varepsilon,xy} \, d(\mu_0 \otimes \mu_1)$. By construction, $\bar{R}^{\varepsilon}(S)=1$ for all $\varepsilon > 0$. Abusing notation, we shall write $\phi_{\varepsilon}(\omega) = \phi_{\varepsilon} (\omega(0),\omega(1))$. With this convention, we have  $P^\varepsilon (A) = \int_A e^{-\phi_\varepsilon/\varepsilon}  \, d\bar{R}^{\varepsilon}$. Set $J(h) = \inf_{(x,y) \in \calX \times \calY} J_{xy}(h)$ and $\phi(h) = \phi(h(0),h(1)) = c(h(0),h(1))-\psi^c (h(0))-\psi(h(1))$ for $h \in S$.

\underline{Step 1}.
Let $A \subset E$ be open and pick any $h \in A$ such that $I(h)<\infty$ (if no such $h$ exists then the conclusion is trivial). By  Lemma \ref{lem: uniqueness}, for every $\delta > 0$, there exists an open neighborhood $G \subset A$ of $h$ such that $\sup_{\omega \in G \cap S}\phi_{\varepsilon_k} (\omega) \le \phi(h) + \delta$ for all large $k$. Hence,
\[
P^{\varepsilon_k} (A) \ge P^{\varepsilon_k} (G) \ge e^{-(\phi (h)+\delta)/\varepsilon_k} \bar{R}^{\varepsilon_k}(G).
\]
Corollary \ref{prop: LDP B bridge} implies that 
\[
 \varepsilon_k \log P^{\varepsilon_k} (A) \ge -\phi(h) - \delta - J(h) + o(1)
\]
as $k \to \infty$. Noting that $\phi(h) + J(h) = I(h)$ yields the desired lower bound. 

\underline{Step 2}. For the upper bound,  we first note that by Lemma \ref{lem: uniqueness}, $\phi_{\varepsilon}$ are uniformly lower bounded on $S$, $\phi_{\varepsilon}(\omega) \ge -M$ for all $\omega \in S$ and $\varepsilon > 0$ for some $M >0$. Let $A \subset E$ be closed. Pick any $\alpha < \infty$ and $\delta > 0$. Set $\Psi_{J}(\alpha) = \{ h : J(h) \le \alpha \} \cap A$, which is a compact subset of $E$ as $J$ is a good rate function and $A$ is closed. By Lemma \ref{lem: uniqueness} and lower semicontinuity of the function $J$, for every $h \in \Psi_{J}(\alpha)$ (which entails $h \in S$), one can find an open neighborhood $U_h$ of $h$ such that 
\[
\inf_{\omega \in \bar{U}_h} J(\omega) \ge J(h) - \delta, \quad \inf_{\omega \in \bar{U}_h \cap S} \phi_{\varepsilon_k} (\omega) \ge \phi(h) - \delta \quad \text{for large $k$},
\]
where $\bar{U}_h$ denotes the closure of $U_h$ in $E$. By compactness of $\Psi_J(\alpha)$, one can find $h_1,\dots,h_N \in \Psi_J(\alpha)$ such that $\Psi_J(\alpha) \subset \bigcup_{i=1}^N U_{h_i}$. Now, setting $F=\left(\bigcup_{i=1}^N U_{h_i}\right )^c \cap A$ (which is a closed subset of $E$), we observe that
\[
P^{\varepsilon_k}(A) = \int_A e^{-\phi_{\varepsilon_k}/\varepsilon_k} \, d\bar{R}^{\varepsilon} \le \sum_{i=1}^N e^{( \varepsilon_k \log \bar{R}^{\varepsilon_k}(\bar{U}_{h_i}) - \phi(h_i) + \delta)/\varepsilon_k}  + e^{(M+\varepsilon_k \log \bar{R}^{\varepsilon_k}(F))/\varepsilon_k}.
\]
Using Lemma \ref{lem: smooth max}, and combining Lemma \ref{lem: uniqueness} and Corollary \ref{prop: LDP B bridge}, we have 
\[
\begin{split}
\varepsilon_k \log P^{\varepsilon_k} (A) &\le \max \Big\{  \varepsilon_k \log \bar{R}^{\varepsilon_k}(\bar{U}_{h_1}) - \phi(h_1) + \delta, \dots, \varepsilon_k \log \bar{R}^{\varepsilon_k}(\bar{U}_{h_N}) - \phi(h_N) + \delta,\\
& \qquad\qquad  M+\varepsilon_k \log \bar{R}^{\varepsilon_k}(F) \Big \} + \varepsilon_k \log (N+1) \\
&\le  \max \Big\{  -\inf_{\omega \in \bar{U}_{h_1}} J(\omega) - \phi(h_1) + \delta, \dots, -\inf_{\omega \in \bar{U}_{h_N}} J(\omega) - \phi(h_N) + \delta,\\
& \qquad\qquad  M-\inf_{\omega \in F} J(\omega) \Big \} + o(1) \\
&\le \max \left \{ -I(h_1) + 2\delta,\dots,-I(h_N)+2\delta, M-\alpha \right \} +o(1) \\
&\le \max \left \{ -\inf_{h \in A} I(h) + 2\delta, M-\alpha \right \} + o(1),
\end{split}
\]
where we used $J(h) + \phi (h) = I(h)$. Since $\alpha < \infty$ and $\delta >0$ are arbitrary, we obtain the desired upper bound. 
Finally, the rate function $I$ being good follows from a similar argument to the proof of Corollary \ref{prop: LDP B bridge} (iii). This completes the proof. 
\end{proof}

\subsection{Proof of Proposition \ref{prop: two point}}

 The fact that the sequence $\{ P^{\varepsilon_k}_{st} \}_{k \in \NN}$ satisfies an LDP having a good rate function follows from Corollary \ref{thm: main2} and the contraction principle. The rate function is given by
\[
    I_{st}(x,y) = \inf_{h: h(s)=x, h(t)=y} \frac{\| h \|_H^2}{2} - \varphi(h(0)) - \psi(h(1)). 
    \]
    First, fix two endpoints $h(0)=x'$ and $h(1)=y'$ and optimize $\| h \|_H^2$ under the constraint $(h(s),h(t))=(x,y)$. The optimal $h$ is given by
\begin{equation}
h(u) = 
\begin{cases}
\left ( 1-\frac{u}{s} \right) x' + \frac{u}{s}x & \text{if} \ u \in [0,s], \\
\left ( 1-\frac{u-s}{t-s} \right) x + \frac{u-s}{t-s}y & \text{if} \ u \in [s,t], \\
\left ( 1-\frac{u-t}{1-t} \right) y + \frac{u-s}{1-t}y' &  \text{if} \ u \in [t,1],
\end{cases}
\label{eq: path}
\end{equation}
which gives $\| h \|_H^2/2 = c^{0,s}(x',x)+c^{s,t}(x,y)+c^{t,1}(y,y')$. Hence,
\[
\begin{split}
I_{st}(x,y) &= \inf_{x',y'} \big \{  c^{0,s}(x',x)+c^{s,t}(x,y)+c^{t,1}(y,y') - \varphi(x') - \psi(y') \big \} \\
&= c^{st}(x,y) + \mathcal{Q}_{s}(-\varphi)(x) + \mathcal{Q}_{1-t}(-\psi)(x) = c^{s,t}(x,y) - \varphi_s(x) - \psi_t(y). 
\end{split}
\]
The final claim follows from Theorem 7.35 in \cite{villani2009optimal} after adjusting the signs. 
\qed

\subsection{Proof of Proposition \ref{prop: langevin}}
{The EOT plan $\check{\pi}^\varepsilon$ is of the form
\[
d\check{\pi}^{\varepsilon}(x,y) = e^{(\check{\varphi}_\varepsilon(x)+\check{\psi}_\varepsilon(y)-c_{\varepsilon}(x,y))/\varepsilon} \, d(\mu_0 \otimes \mu_1)(x,y),
\]
where $(\check{\varphi}_\varepsilon,\check{\psi}_\varepsilon)$ are EOT potentials satisfying the Schr\"{o}dinger system (\ref{eq: S system}) with $c$ replaced by $c_\varepsilon$. For uniqueness, we assume without loss of generality  $\int \check{\varphi}_\varepsilon \, d\mu_0 = \int \check{\psi}_\varepsilon \, d\mu_1$. Consider the mixture distribution $Q^{\varepsilon} = \int \check{R}^{\varepsilon,xy} \, d(\mu_0 \otimes \mu_1)(x,y)$, then 
\[
\frac{d\check{P}^\varepsilon}{dQ^\varepsilon} (\omega)= \exp \left \{\frac{1}{\varepsilon} \left(\check{\varphi}_\varepsilon(\omega(0))+\check{\psi}_\varepsilon(\omega(1))-c_{\varepsilon}(\omega(0),\omega(1))\right) \right \}, \ \omega = (\omega(t))_{t \in [0,1]} \in E. 
\]
Furthermore, by Theorems 4.4.6 and 4.4.12 in \cite{stroock2008partial}, one has 
\begin{equation}
\lim_{\varepsilon \downarrow 0} c_{\varepsilon}(x,y) = \frac{|x-y|^2}{2} = c(x,y) \quad \text{uniforly over $(x,y) \in \calX \times \calY$}.
\label{eq: uniform}
\end{equation}
Hence, in view of the direct proof of Corollary \ref{thm: main2}, the desired claim follows once we verify the following:
\begin{itemize}
\item The mixture distributions $\{ Q^{\varepsilon_k} \}_{k \in \NN}$ satisfy the LDP with good rate function $J(h) = \inf_{(x,y) \in \calX \times \calY} J_{xy}(h)$;
\item As $k \to \infty$, $\check{\varphi}_{\varepsilon_k} \to \psi^c$ and $\check{\psi}_{\varepsilon_k} \to \psi$ uniformly on $\calX$ and $\calY$, respectively. 
\end{itemize}
The first item follows by establishing exponential continuity of $\{ \check{R}^{\varepsilon_k, xy} \}_{k \in \NN}$ w.r.t. $(x,y)$. To this end, we invoke the Radon-Nikodym derivative of the Langevin bridge $\check R^{\varepsilon,xy}$ against the Brownian bridge $R^{\varepsilon,xy}$:
\begin{equation}
\frac{d\check{R}^{\varepsilon,xy}}{dR^{\varepsilon,xy}}(\omega)=Z_{\varepsilon,xy}^{-1}\exp \left \{ -\frac{\varepsilon}{2} \int_0^1 \big(|\nabla V(\omega(t))|^2 -\Delta V(\omega(t)) \big) \, dt \right \}, 
\label{eq: RN}
\end{equation}
where $\Delta V$ is the Laplacian of $V$ and $Z_{\varepsilon,xy}$ is the normalizing constant. 
See Section 5 in \cite{levy1993dynamics} and the proof of Theorem 2.1 in \cite{conforti2018fluctuations}; see also Remark \ref{rem: RN} below. Heuristically, this follows from the following observation. The Langevin diffusion $X^\varepsilon$ follows the SDE
\[
dX^\varepsilon (t) = -\varepsilon \nabla V(X^\varepsilon(t)) \, dt + \sqrt{\varepsilon} \, dW(t).
\]
The Girsanov theorem yields that 
\[
\frac{d\check{R}^\varepsilon}{dR^\varepsilon}(\omega) = \exp \left \{ -\int_0^1 \nabla V(\omega(t)) \cdot d\omega(t) - \frac{\varepsilon}{2} \int_0^1 |\nabla V(\omega(t))|^2 \, dt \right \}
\]
under $R^\varepsilon$. An application of Ito's formula yields that 
\[
\int_0^1 \nabla V(\omega(t)) \cdot d\omega(t) = V(\omega(1)) - V(\omega(0))  - \frac{\varepsilon}{2} \int_0^1 \Delta V(\omega(t)) \, dt
\]
under $R^\varepsilon$. The bridge case is obtained by canceling $V(\omega(1)) - V(\omega(0))$, which is to be expected since it depends only on the endpoints.
Now, since the potential $V$ has bounded derivatives, the desired exponential continuity follows from Proposition \ref{prop: exponential}. }

{
For the second item, by the Schr\"{o}dinger system and Jensen's inequality, one has 
\[
\begin{split}
|\check{\varphi}_\varepsilon(x) - \check{\varphi}_\varepsilon (x')| &\le \sup_{y \in \calY}|c_\varepsilon(x,y) - c_\varepsilon(x',y)| \\
&\le \sup_{y \in \calY}|c(x,y) - c(x',y)| + 2\sup_{\calX \times \calY}|c_\varepsilon-c|.
\end{split}
\]
By the generalized Ascoli-Arzel\`{a} theorem (cf. Lemma 2.2 in \cite{nutz2021entropic}), the sequence of functions $\{ \check{\varphi}_{\varepsilon_k} \}_{k \in \NN}$ converges uniformly on $\calX$ along a subsequence. A similar result holds for $\check{\psi}_{\varepsilon_k}$. The rest of the proof is analogous to the second part of the proof of Lemma \ref{lem: uniqueness}. This completes the proof. }
\qed

{
\begin{remark}[Derivation of (\ref{eq: RN})]
\label{rem: RN}
Formally, the Radon-Nikodym derivative (\ref{eq: RN}) follows by reducing to the $\varepsilon=1$ case via reparameterization and the formula (25) in \cite{conforti2018fluctuations}. Indeed, the process $Y^\varepsilon(t)=X^{\varepsilon}(t)/\sqrt{\varepsilon}$ satisfies 
\[
dY^{\varepsilon}(t) = -\nabla V^\varepsilon(Y^\varepsilon(t)) dt + dW(t),
\]
where $V^{\varepsilon}(x) = V(\sqrt{\varepsilon}x)$. By the formula (25) in \cite{conforti2018fluctuations}, denoting by $Y^{\varepsilon}_{\#}\Prob$ the law of the process $Y^\varepsilon = (Y^\varepsilon (t))_{t \in [0,1]}$, one has
\[
\begin{split}
\frac{d (Y^{\varepsilon}_{\#}\Prob)^{xy}}{dR^{1,xy}} (\omega) &= Z_{xy}^{-1}\exp \left \{ -\frac{1}{2} \int_0^1 \big ( |\nabla V^\varepsilon (\omega(t))|^2 - \Delta V^\varepsilon (\omega(t)) \big) \, dt \right \} \\
&=Z_{xy}^{-1}\exp \left \{ -\frac{\varepsilon}{2} \int_0^1 \big ( |\nabla V (\sqrt{\varepsilon}\omega(t))|^2 - \Delta V (\sqrt{\varepsilon}\omega(t))\big) \, dt \right \},
\end{split}
\]
where $Z_{xy}$ is the normalizing constant. 
Now, the formula (\ref{eq: RN}) follows by a simple reparameterization. 
\end{remark}
}

\section*{Acknowledgments}
The author would like to thank the editor and two anonymous referees for their careful reading and constructive comments that helped improve the quality of this paper.

\bibliographystyle{plain}
\bibliography{ref}
\end{document}